\documentclass[10pt]{amsart}

\usepackage{amssymb}
\usepackage{hyperref}
\usepackage{amsmath,amscd}
\usepackage[margin=1in]{geometry}
\usepackage{multicol}

\newtheorem{theorem}{Theorem}[section]

\newtheorem{proposition}[theorem]{Proposition}
\newtheorem{corollary}[theorem]{Corollary}

\theoremstyle{definition}
\newtheorem{definition}[theorem]{Definition}
\newtheorem{example}[theorem]{Example}

\theoremstyle{remark}
\newtheorem{remark}[theorem]{Remark}

\usepackage{array}
\newcolumntype{C}{>{$}c<{$}} 

\def\pc{\mathrm{pc}}
\def\ac{\mathrm{ac}}
\def\rac{\mathrm{rac}}
\def\rpc{\mathrm{rpc}}

\begin{document}

\title{Partially Palindromic Compositions} 
\author{Jia Huang}
\address{Department of Mathematics and Statistics, University of Nebraska, Kearney, NE 68849, USA}
\curraddr{}
\email{huangj2@unk.edu}

\begin{abstract}
We generalize recent work of Andrews, Just, and Simay on modular palindromic compositions and anti-palindromic compositions by viewing all compositions partially (modular) palindromic or anti-palindromic.
More precisely, we enumerate compositions by the extent to which they are (modular) palindromic or anti-palindromic.
We obtain various closed formulas from generating functions and provide bijective proofs for many of them. 
We recover some known results of Andrews, Just, and Simay and discover new connections with numerous sequences in The On-Line Encyclopedia of Integer Sequences (OEIS). 
\end{abstract}

\keywords{composition; palindrome}
\subjclass{05A15, 05A19}

\maketitle

\section{Introduction}

The concept of (integer) composition is useful and well studied in enumerative and algebraic combinatorics.
A \emph{composition} of $n$ is a sequence $\alpha=(\alpha_1, \ldots, \alpha_\ell)$ of positive integers with $|\alpha|:=\alpha_1+\cdots+\alpha_\ell=n$.
The \emph{parts} of $\alpha$ are $\alpha_1,\ldots,\alpha_\ell$ and the \emph{length} of $\alpha$ is $\ell(\alpha):=\ell$. 
We often drop the parentheses and commas in $\alpha$ when the parts are all single-digit numbers.
We can encode $\alpha$ in a binary string of length $n$ whose $i$th entry is $1$ if $i\in \{\alpha_1, \alpha_1+\alpha_2, \ldots, \alpha_1+\cdots+\alpha_\ell\}$ or $0$ otherwise. 
For instance, $\alpha=24112$ is a composition of $10$ corresponding to the binary string $0100011101$.
Thus there are exactly $2^{n-1}$ compositions of $n$ as they are in bijection with binary strings of length $n$ whose last entry must be $1$.

It is well known that the number of palindromic compositions of $n$ is $2^{\lfloor n/2 \rfloor}$~\cite{PalComp}, where a composition $\alpha=(\alpha_1, \ldots, \alpha_\ell)$ is \emph{palindromic} if $\alpha_i=\alpha_{\ell+1-i}$ for all $i=1, 2, \ldots, \lfloor \ell/2 \rfloor$.
Recently, Andrews and Simay~\cite{ParityPalComp} generalized this to \emph{parity palindromic compositions} by replacing the condition $\alpha_i=\alpha_{\ell+1-i}$ with $\alpha_i \equiv \alpha_{\ell+1-i} \pmod 2$ and Just~\cite{PalCompMod} further generalized it to \emph{palindromic compositions modulo $m$} by imposing the condition $\alpha_i \equiv \alpha_{\ell-i} \pmod m$ for an arbitrary positive integer $m$.
By Just~\cite{PalCompMod}, the generating function of the number $\pc(n,m)$ of palindromic compositions of $n$ modulo $m$ is
\[ \sum_{n\ge1} \pc(n,m) q^n = \frac{q+2q^2-q^{m+1}}{1-2q^2-q^m}. \]
For $m=2$ and $n\ge1$ we have $\pc(2n,2) = \pc(2n+1,2) = 2\cdot 3^{n-1}$ with an analytic proof by Andrews and Simay~\cite{ParityPalComp}, a combinatorial proof by Just~\cite{PalCompMod}, and a recursive proof by Vatter~\cite{RecursiveParity}.
For $m=3$, Just~\cite{PalCompMod} showed, both analytically and combinatorially, that $\pc(1,3)=1$ and $\pc(n,3) = 2 F_{n-1}$ for all $n\ge2$, where $F_n$ is the ubiquitous \emph{Fibonacci number} defined by $F_0=0$, $F_1=1$, and $F_n=F_{n-1}+F_{n-2}$ for $n\ge2$.
The case $m>3$ was also briefly discussed by Just~\cite{PalCompMod}.

On the other hand, Andrews, Just and Simay~\cite{AntiPalComp} defined a composition $\alpha=(\alpha_1, \ldots, \alpha_\ell)$ to be \emph{anti-palindromic} if $\alpha_i \ne \alpha_{\ell-i}$ for all $i=1,2, \ldots, \lfloor \ell/2 \rfloor$.
They showed, among other things, that the number of anti-palindromic compositions of $n$ is $\ac(n) = T_n + T_{n-2}$, where $T_n$ is a \emph{tribonacci number} defined by the recurrence $T_n = T_{n-1} + T_{n-2} + T_{n-3}$ for $n\ge 3$ with initial conditions $T_i = 0$ for $i<1$ and $T_1 = T_2 = 1$~\cite[A000073]{OEIS}.

In this paper we generalize the aforementioned work~\cite{ParityPalComp, AntiPalComp, PalComp, PalCompMod} by viewing all compositions partially (modular) palindromic/anti-palindromic.
More precisely, we count compositions by the extent to which they are (modular) palindromic/anti-palindromic.
This is analogous to  the well-known Euler's partition theorem generalized to Glaisher's theorem and further to Franklin's theorem (for more details and a parallel theory on compositions, see recent work~\cite{CompRes}).
 
\begin{definition}\label{def:pc}
(i) For $n,k\ge0$, let $\pc^k(n)$ denote the number of compositions $\alpha=(\alpha_1, \ldots, \alpha_\ell)$ of $n$ satisfying $\#\{ 1\le i\le \ell/2: \alpha_i \ne \alpha_{\ell+1-i}\} = k$; note that $\pc^0(0)=1$ and $\pc^k(0)=0$ for $k\ge1$.
We further introduce a refinement $\pc^k(n) =  \pc_+^k(n) + \pc_-^k(n)$: among all compositions $\alpha$ counted by $\pc^k(n)$, those with $\ell$ even (i.e., having no middle part) or with $\ell$ odd and $\alpha_{(\ell+1)/2}$ even (i.e., having an even middle part) are counted by $\pc_+^k(n)$ and those with $\ell$ odd and $\alpha_{(\ell+1)/2}$ odd (i.e., having an odd middle part) are counted by $\pc_-^k(n)$.
For example, $\pc^1_+(4) = |\{31,13\}|=2$, $\pc^1_-(4) = \{211, 112\}|=2$, and $\pc^1(4)=2+2=4$.

\noindent
(ii) We define $\ac^k(n)$, $\ac_+^k(n)$, and $\ac_-^k(n)$ by replacing $\alpha_i \ne \alpha_{\ell+1-i}$ with $\alpha_i = \alpha_{\ell+1-i}$ in (i).

\noindent
(iii) For any positive integer we define $\pc^k(n,m)$, $\pc_+^k(n,m)$, and $\pc_-^k(n,m)$ by replacing $\alpha_i \ne \alpha_{\ell+1-i}$ with $\alpha_i \not\equiv \alpha_{\ell+1-i} \pmod m$ in (i), and define $\ac^k(n,m)$, $\ac_+^k(n,m)$, and $\ac_-^k(n,m)$ by using $\equiv$ instead of $\not\equiv$.

\noindent (iv)
We also define $\pc^k(n,\infty):=\pc^k(n)$, $\pc^k_+(n,\infty):=\pc_+^k(n)$, $\pc^k_-(n,\infty):=\pc^k_-(n)$, $\ac^k(n,\infty):=\ac^k(n)$, $\ac^k_+(n,\infty):=\ac_+^k(n)$, and $\ac^k_-(n,\infty):=\pc^k_-(n)$.
We often drop the superscript $k$ when $k=0$ since $\ac^0(n)$ and $\pc^0(n,m)$ agree with $\ac(n)$~\cite{AntiPalComp} and $\pc(n,m)$~\cite{PalCompMod}, respectively.
\end{definition}

By Definition~\ref{def:pc}, we have $\pc^k(n,m) = \pc_+^k(n,m) + \pc_-^k(n,m)$ and $\pc_-^k(0,m) = 0$.
We also have $\pc_+^k(n,m) = \pc_-^k(n+1,m)$ since any composition $\alpha=(\alpha_1, \ldots, \alpha_\ell)$ counted by $\pc_+^k(n,m)$ corresponds to a composition $\alpha'$ counted by counted by $\pc_-^k(n+1, m)$, where $\alpha':=(\alpha_1, \ldots, \alpha_s, 1, \alpha_{s+1}, \ldots, \alpha_\ell)$ if $\ell=2s$, or $\alpha':=(\alpha_1, \ldots, \alpha_s, \alpha_{s+1}+1, \alpha_{s+2}, \ldots, \alpha_\ell)$ if $\ell=2s+1$ and $\alpha_{s+1}$ is even. 
It follows that
\begin{equation}\label{eq:pc+-}
\pc^k(n,m) = \pc_+^k(n,m) + \pc_+^k(n-1,m)
\end{equation}
where $\pc_+^k(-1,m):=0$. 
Similarly, with $\ac_+^k(-1,m):=0$ we have
\begin{equation}\label{eq:ac+-}
\ac^k(n,m) = \ac_+^k(n,m) + \ac_+^k(n-1,m).
\end{equation}
To obtain closed formulas for $\pc^k(n,m)$ and $\ac^k(n,m)$, it suffices to do that for $\pc_+^k(n,m)$ and $\ac_+^k(n,m)$.

In Section~\ref{sec:ppc} we obtain one closed formula for $\pc_+^k(n)$ and three for $\ac_+^k(n)$ using their generating functions.
We also combinatorially prove all of these formulas except the last one for $\ac_+^k(n)$ via a bijection between compositions and certain pairs of nonnegative integer sequences.

For $k=0$, our formula for $\pc^k_+(n)$ becomes $\pc_+(n) = 2^{n/2}$ if $n$ is even or $\pc_+(n)=0$ if $n$ is odd, and this implies the well-known fact that $\pc(n) = 2^{\lfloor n/2\rfloor}$~\cite{PalComp}.
We also have $\pc_+^1(n) = 2+(\lceil n/2 \rceil-2)2^{\lceil n/2 \rceil}$ for $n\ge0$~\cite[A036799]{OEIS}.
On the other hand, when $k=0$ the three formulas for $\ac_+^k(n)$ all reduce to a tribonacci number $T'_{n+1}$ defined by the recurrence $T'_n=T'_{n-1}+T'_{n-2}+T'_{n-3}$ for $n\ge3$ with initial conditions $T'_0=0$, $T'_1=1$, and $T'_2=0$~\cite[A001590]{OEIS}.
Combining this with~\eqref{eq:ac+-} gives $\ac(n) = T'_{n+1}+T'_n$ which is different from the formula $\ac(n) = T_{n}+T_{n-2}$ obtained by Andrews, Just and Simay~\cite{AntiPalComp}, although they are equivalent via the relations $T'_{n+1}=T_{n-1}+T_{n-2}$ and $T'_n=T_{n}-T_{n-1}$~\cite[A001590]{OEIS}.

We still have a fourth formula for $\ac^k(n)$, which is an alternating sum.
We suspect that it has a combinatorial proof via inclusion-exclusion. 
For $k=0$ this formula reduces to $\ac(n)=T_{n+1}-T_{n-1}$, which is also equivalent to $\ac(n) = T'_{n+1}+T'_n$.
A byproduct of this is a formula for the tribonacci number $T_n$ which is not found in OEIS~\cite[A000073]{OEIS} but has a simple bijective proof.

In Section~\ref{sec:rpc} we extend the concept of reduced anti-palindromic compositions introduced by Andrews, Just and Simay~\cite{AntiPalComp} by defining $\rpc^k(n)$ (resp., $\rac^k(n)$) as the number of equivalence classes of compositions counted by $\pc^k(n)$ (resp., $\ac^k(n)$) under swaps of the first and last parts, the second and second last parts, and so on.
We obtain closed formulas for the similarly defined $\rpc^k_+(n)$ and $\rac^k_+(n)$, which lead to formulas for $\rpc^k(n)$ and $\rac^k(n)$.
It turns out that $\rpc^k(n)=\pc^k(n)/2^k$ and $\rac_+^k(n)$ coincides with the number of compositions of $n-k$ with exactly $k$ parts equal to $1$~\cite[A105422]{OEIS}.  

Now let $m$ be a positive integer.
In Section~\ref{sec:pcm} we provide a positive sum formula and an alternating sum formula for $\pc_+^k(n,m)$, and thus for $\pc^k(n,m)$ by~\eqref{eq:pc+-}.
In particular, we have a formula for $\pc(n,m)$, which implies the formulas of $\pc(n,2)$ and $\pc(n,3)$ as well as some properties of $\pc(n,m)$ given by Just~\cite{PalCompMod}.
In section~\ref{sec:rpcm} we provide formulas for $\rpc_+^k(n,m)$ and $\rpc^k(n,m)$ (defined similarly as $\rpc_+^k(n)$ and $\rpc^k(n)$) and find connections to certain sequences in OEIS~\cite{OEIS} for special values of $m$ and $k$.

In Section~\ref{sec:acm} we provide a positive sum formula and an alternating sum formula for $\ac_+^k(n,m)$ as well as a third formula for $\ac^k(n,m)$ which is also an alternating sum.
For $k=0$ we find no connection of $\ac_+(n,m)$ and $\ac(n,m)$ with existing sequences in OEIS~\cite{OEIS}, but for $m=1$, a signed version of $\ac_+^k(n,1)$ is an interesting Riordan array~\cite[A158454]{OEIS} and $\ac^k(n,1)$ counts compositions of $n$ with $2k$ or $2k+1$ parts.
In Section~\ref{sec:racm} we provide formulas for $\rac_+^k(n,m)$ and $\rac^k(n,m)$ (defined similarly as $\rac_+^k(n)$ and $\rac^k(n)$) and find connections to various integer sequences in OEIS~\cite{OEIS} for small values of $m$ and $k$.

Below is a summary of the integer sequences in OEIS~\cite{OEIS} that occur in this paper. 
\begin{itemize}
\item 
A036799$(n) = \pc_+(2n+1)=\pc_+(2n+2)$, A025192$(n) = \pc_+(2n,2)$, A008346$(n) = \pc_+(n+2,3)/2$
\item
A001590$(n+1) = \ac_+(n)$, 
A000073$(n+2)-$
A000073$(n) = \ac(n)$
\item
A212804$(n) = \rac_+(n)$,
A006367$(n) = \rac_+^1(n+2)$,
A105423$(n) = \rac_+^2(n+4)$, \\
A105422$(n,k) = \rac_+^k(n+k)$,
A324969$(n) = \rac(n)$,
A208354$(n) = \rac^1(n+2)$
\item
A002620$(n) = \ac_+^1(n,1)$, A001752$(n) = \ac^2_+(n+4,1)$, A001769$(n) = \ac^3_+(n+6,1)$, \\
A001780$(n) = \ac^4_+(n+8,1)$, A001786$(n) = \ac^5_+(n+10,1)$,
A161680$(n) = \ac^1(n,1)$, \\
A000332$(n) = \ac^2(n,1)$, A000579$(n) = \ac^3(n,1)$, A000581$(n) = \ac^4(n+8,1)$
\item
A052547$(n)=\rpc_+(n,1)$, A001870$(n) = \rpc_+^1(2n+3,2)$, 052534$(n)=\rpc_+(2n,4)$, \\
A028495$(n)=\rpc(n,1)$, A094967$(n)=\rpc(n,2)$
\item
A062200$(n) = \rac_+(n,2)$,
A113435$(n)=\rac(n,3)$, \\
A008805$(n)=\rac_+^1(n+2,1)$, 
A096338$(n)=\rac_+^2(n+3,1)$,
A299336$(n)=\rac_+^3(n+6,1)$, \\
A002620$(n)=\rac^1(n,1)$,
A002624$(n)=\rac^2(n+4,1)$,
A060099$(n)=\rac^3(n+6,1)$, \\
A060100$(n)=\rac^4(n+8,1)$,
A060101$(n)=\rac^5(n+10,1)$,
A060098$(n,k)=\rac^k(n+2k,1)$
\end{itemize}

All of the closed formulas in this paper are derived from generating functions and they are either positive sums or alternating sums of products of binomial coefficients.
To be precise, we define the binomial coefficient
\[ \binom{a}{b} := 
\begin{cases}
\frac{a!}{b!(a-b)!}  & \text{if } a\ge b\ge 1 \\
1 & \text{if } b=0 \\
0 & \text{otherwise}
\end{cases} \]
for integers $a$ and $b$.
While we have bijective proofs for most of the positive sum formulas in this paper, it would be nice to see combinatorial proofs for the rest (possibly via inclusion-exclusion for the alternating sum formulas).

\tableofcontents

\section{Partially palindromic/anti-palindromic compositions}\label{sec:ppc}

In this section we provide closed formulas for $\pc^k(n)$ and $\ac^k(n)$.  
By~\eqref{eq:pc+-} and~\eqref{eq:ac+-}, it suffices to do this for $\pc_+^k(n)$ and $\ac_+^k(n)$.

\begin{theorem}\label{thm:pc}
For any integers $n,k\ge0$ we have
\begin{align} \label{eq:pc}
\pc_+^k(n) &= \sum_{i+2j=n-3k} \binom{i+k-1}{i} \binom{j+k}{j}2^{j+k}, \\
\ac_+^k(n) &= \sum_{2r+i+j=n-2k} \binom{r+k}{r} \binom{r}{i} \binom{r+j-1}{j}. \label{eq:ac1}
\end{align}
\end{theorem}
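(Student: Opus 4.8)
The plan is to compute the generating functions $\sum_{n\ge0}\pc_+^k(n)q^n$ and $\sum_{n\ge0}\ac_+^k(n)q^n$ in closed form and then read off the coefficient of $q^n$. The starting point is to view a composition $\alpha=(\alpha_1,\ldots,\alpha_\ell)$ through its outside-in pairs $(\alpha_i,\alpha_{\ell+1-i})$ for $1\le i\le\lfloor\ell/2\rfloor$, together with a middle part $\alpha_{(\ell+1)/2}$ when $\ell$ is odd. Since $\alpha$ is recovered uniquely from the ordered list of its pairs and its (possibly absent) middle part, I would encode a \emph{matched} pair $(a,a)$ by $\frac{q^2}{1-q^2}$, a \emph{mismatched} pair $(a,b)$ with $a\ne b$ by $\frac{q^2}{(1-q)^2}-\frac{q^2}{1-q^2}=\frac{2q^3}{(1-q)^2(1+q)}$, and the middle factor (an empty part, or an even part, as required by the $+$ refinement) by $1+\frac{q^2}{1-q^2}=\frac{1}{1-q^2}$.

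To track the statistic $k$, I would place a marker $x$ on the relevant type of pair and form the geometric series over sequences of pairs. Writing $M=\frac{q^2}{1-q^2}$ and $D=\frac{2q^3}{(1-q)^2(1+q)}$, a sequence of pairs has generating function $(1-M-xD)^{-1}$ for $\pc$ (marking mismatches) or $(1-D-xM)^{-1}$ for $\ac$ (marking matches); extracting $[x^k]$ gives $\frac{D^k}{(1-M)^{k+1}}$ and $\frac{M^k}{(1-D)^{k+1}}$ respectively, which I then multiply by the middle factor $\frac{1}{1-q^2}$. The bulk of the work is the algebraic simplification: for $\pc_+^k$ one finds $1-M=\frac{1-2q^2}{1-q^2}$ and $\frac{D}{1-M}=\frac{2q^3}{(1-q)(1-2q^2)}$, which collapse the product to $\frac{2^kq^{3k}}{(1-q)^k(1-2q^2)^{k+1}}$; expanding $(1-q)^{-k}=\sum_i\binom{i+k-1}{i}q^i$ and $(1-2q^2)^{-(k+1)}=\sum_j\binom{j+k}{j}2^jq^{2j}$ then yields \eqref{eq:pc} after matching the exponent $i+2j=n-3k$.

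For $\ac_+^k$ the crucial computation is $1-D=\frac{1-q-q^2-q^3}{(1-q)(1-q^2)}$, the tribonacci denominator appearing exactly as one would hope; combined with $M$ and the middle factor this gives the compact form $\frac{(1-q)^{k+1}q^{2k}}{(1-q-q^2-q^3)^{k+1}}$. The step I expect to be the main obstacle is extracting the triple sum \eqref{eq:ac1} in its stated shape: rather than expanding $(1-q-q^2-q^3)^{-(k+1)}$ directly, I would rewrite $\frac{1-q}{1-q-q^2-q^3}=\bigl(1-\frac{q^2(1+q)}{1-q}\bigr)^{-1}$ so that the $(k+1)$st power becomes $\sum_r\binom{r+k}{r}\frac{q^{2r}(1+q)^r}{(1-q)^r}$, and then use $(1+q)^r=\sum_i\binom{r}{i}q^i$ and $(1-q)^{-r}=\sum_j\binom{r+j-1}{j}q^j$. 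Collecting the factor $q^{2k}$ and matching $2r+i+j=n-2k$ produces exactly $\binom{r+k}{r}\binom{r}{i}\binom{r+j-1}{j}$, as claimed.

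Finally, I would sanity-check the $k=0$ specializations — $\frac{1}{1-2q^2}$ giving $\pc_+(2j)=2^j$, and $\frac{1-q}{1-q-q^2-q^3}$ giving the tribonacci values $\ac_+(n)=T'_{n+1}$ — to confirm the constants and the treatment of the degenerate binomial conventions (e.g.\ $\binom{i+k-1}{i}$ at $k=0$). An alternative I would keep in mind, matching the paper's promise of bijective arguments, is to build the composition directly from $k$ mismatched (resp.\ matched) pairs, a sequence of matched (resp.\ mismatched) pairs, and a middle part, reading each binomial factor as a choice of positions or of a weak composition; the generating-function derivation is precisely what tells me which such bijection to aim for.
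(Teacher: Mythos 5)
Your proposal is correct and follows essentially the same route as the paper's analytic proof: the paper forms the pair generating function $G(q,t)=\frac{q^2}{1-q^2}+t\bigl(\frac{q^2}{(1-q)^2}-\frac{q^2}{1-q^2}\bigr)$, takes $\frac{1}{1-G(q,t)}\cdot\frac{1}{1-q^2}$ for $\pc_+^k$ and $\frac{1}{1-tG(q,1/t)}\cdot\frac{1}{1-q^2}$ for $\ac_+^k$ (your $(1-M-xD)^{-1}$ and $(1-D-xM)^{-1}$), and simplifies to $\frac{(2q^3)^k}{(1-q)^k(1-2q^2)^{k+1}}$ and $\frac{q^{2k}}{(1-q^2(1+q)/(1-q))^{k+1}}$ before expanding exactly as you describe. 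The bijective alternative you mention in passing is also carried out in the paper as a second proof, but your main argument matches the paper's analytic one.
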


\begin{proof}[Analytic Proof]
We derive $\pc_+^k(n)$ from the generating function $\sum_{n,\, k\ge0} \pc_+^k(n) q^n t^k$.
Given a composition counted by $\pc_+^k(n)$, we pair the first part with the last part, the second part with the second last part, and so on. 
Each pair may or may not be equal.
If the middle part exists, it must be even and is not paired with any other part.      
The generating function for each pair is 
\begin{align*} 
G(q,t) &= (q^2+q^4+\cdots) + t((q+q^2+\cdots)^2-(q^2+q^4+\cdots)). 
\end{align*}
We can simplify $G(q,t)$ and obtain
\begin{align*} 
\sum_{n,\, k\ge0} \pc_+^k(n) q^n t^k 
&= \frac{1}{1-G(q,t)} \frac{1}{1-q^2} = \frac{1-q}{(1-q)(1-2q^2)-2q^3t}.
\end{align*}
Extracting the coefficient of $t^k$ gives
\begin{align*} 
\sum_{n\ge0} \pc_+^k(n) q^n 
&= \frac{(2q^3)^k}{(1-q)^k(1-2q^2)^{k+1}} 
= (2q^3)^k \sum_{i\ge0} \binom{i+k-1}{i} q^i \sum_{j\ge0} \binom{j+k}{j}(2q^2)^j.
\end{align*}
This implies the formula~\eqref{eq:pc} for $\pc_+^k(n)$.
Similarly, we obtain the generating function for $\ac_+^k(n)$:
\begin{align}\label{eq:AC}
\sum_{n,\, k\ge0} \ac_+^k(n) q^n t^k & = \frac{1}{1-tG(q,1/t)} \frac{1}{1-q^2} 
= \frac{1-q}{1-q-q^2-q^3-(1-q)q^2t}.
\end{align}
Extracting the coefficient of $t^k$ gives
\begin{align*} 
\sum_{n\ge0} \ac_+^k(n) q^n 
&= \frac{(1-q)^{k+1}q^{2k}}{(1-q-q^2-q^3)^{k+1}} 
= \frac{q^{2k}}{(1-q^2(1+q)/(1-q))^{k+1}}.
\end{align*}
Applying the binomial theorem to this gives the formula~\eqref{eq:ac1} for $\ac_+^k(n)$.
\end{proof}

\begin{proof}[Combinatorial Proof]
Let $\alpha=(\alpha_1, \ldots, \alpha_\ell)$ be a composition of $n$ such that $\alpha_{(\ell+1)/2}$ is even whenever $\ell$ is odd.
Let $S:=\{ 1\le h\le \ell/2: \alpha_h \ne \alpha_{\ell+1-h} \}$.
Define a composition $\beta_h := ( |\alpha_h-\alpha_{\ell+1-h}|: h\in S)$.
Subtracting the parts of $\beta$ from the corresponding parts of $\alpha$ gives a palindromic composition $\gamma = (\gamma_1, \ldots, \gamma_\ell)$ with $|\beta|+|\gamma|=n$, where $\gamma_h := \min\{\alpha_h, \alpha_{\ell+1-h} \}$ for all $h=1, \ldots,\ell$. 
Since $\gamma$ is palindromic, we can encode it in a binary sequence of $b=(b_1, \ldots, b_{|\gamma|/2})$, where $b_h=1$ if and only if $h\in \{\gamma_1, \gamma_1+\gamma_2, \ldots, \gamma_1+\cdots+\gamma_{\lfloor \ell/2 \rfloor}\}$.
We obtain $b'$ (resp., $b''$) from $b$ by adding $\beta_h$ to $b_{\gamma_1+\cdots+\gamma_h}$ for all $h\in S$ with $\alpha_h > \alpha_{\ell+1-h}$ (resp., $\alpha_h < \alpha_{\ell+1-h}$).
Then we have a bijection $\alpha\leftrightarrow(b',b'')$ since $\alpha$ can be recovered from $b'$ and $b''$:
\begin{itemize}
\item
Let $s$ be the number of positive entries in $b'$. 
Then $\ell$ is $2s$ if $b'$ ends in $1$ or $2s+1$ otherwise.
\item
For each $h=1, \ldots, s$, the $h$th positive entry of $b'$ (resp., $b''$) plus the number of consecutive zeros immediately before it equals $\alpha_h$ (resp.,  $\alpha_{\ell+1-h}$).
\item
If $\ell=2s+1$ then $\alpha_{s+1}$ equals twice the length of the rightmost string of consecutive zeros in $b'$.
\end{itemize} 

Restrict the bijection $\alpha\leftrightarrow(b', b'')$ to those $\alpha$ with $|S|=k$. 
Then $|\beta|=i+k$ for some $i\ge0$, $|\gamma|=2(j+k)$ for some $j\ge0$, and $n=|\beta|+|\gamma|=i+2j+3k$.
We can construct all $(b',b'')$ corresponding to those $\alpha$ with $|S|=k$ as follows.
\begin{itemize}
\item[(i)]
First let $b'$ and $b''$ be the same binary sequence with $k$ ones indexed by a set $T$ and $j$ zeros. 
\item[(ii)]	
Let $\beta=(\beta_h: h\in T)$ be a composition of $i+k$ with exactly $k$ parts indexed by $T$.
\item[(iii)]
For each $h\in T$, add $\beta_h$ to either $b'_h$ or $b''_h$.
\item[(iv)]
For each $h\in\{1, \ldots, j+k\}\setminus T$, let $b'_h=b''_h$ be either $0$ or $1$.
\end{itemize} 
The numbers of possibilities for the above steps are $\binom{j+k}{j}$, $\binom{i+k-1}{i}$, $2^k$, and $2^j$, respectively.
The formula~\eqref{eq:pc} for $\pc_+^k(n)$ follows.

Now restrict the bijection $\alpha\leftrightarrow(b', b'')$ to those $\alpha$ with $|S|=\lfloor \ell/2 \rfloor -k$, where $\ell$ is the length of $\alpha$. 
Let $i:=\#\{ h\in S: \alpha_h > \alpha_{\ell+1-h}\}$.
Then $|\beta| = i+j$ for some $j\ge0$, $|\gamma| = 2(r+k)$ for some $r\ge0$, and $n = |\beta|+|\gamma| = 2r+2k+i+j$. 
We can construct all $(b',b'')$ corresponding to those $\alpha$ with $|S|=\lfloor \ell/2 \rfloor -k$ as follows.
\begin{itemize}
\item[(i)]
First let $b'$ and $b''$ be the same binary sequence with $r$ zeros indexed by a set $T$ and $k$ ones. 
\item[(ii)]
Choose $i$ indices from $T$ without repetition and increase the corresponding entries in $b'$ and $b''$ by $2$ and $1$, respectively.
\item[(iii)]
Choose $j$ indices from $T$ with repetition and do the following each time.
\begin{itemize}
\item
If we pick one of the $i$ indices chosen at the last step, add $1$ to the corresponding entry in $b'$.
\item
If we pick an index not among the $i$ indices chosen at the last step, increase the corresponding entries in $b'$ and $b''$ by $1$ and $2$, respectively for the first time and increase only the corresponding entry in $b''$ by $1$ each time afterwards.
\end{itemize}
\end{itemize} 
The numbers of ways to finish the above steps are $\binom{r+k}{k}$, $\binom{r}{i}$, and $\binom{r+j-1}{j}$, respectively.
The formula~\eqref{eq:ac1} for $\ac_+^k(n)$ follows.
\end{proof}

We provide some examples below to illustrate the above combinatorial proof.

\begin{example}
(i) For $n=25$ and $k=3$ we have a composition $\alpha=2141124111232$ counted by $\pc_+^k(n)$ with $S=\{2,3,6\}$, $\beta=221$, $\gamma=212111 4 111212$, $b'=0110311200$, $b''=0130111100$, $i=2$, $j=7$, and $i+2j+3k=25=n$.
The construction of $(b', b'')$ begins with $b'=b''=0010100100$, which has $k=3$ ones indexed by $T=\{3,5,8\}$ and $j=7$ zeros.
Adding the second and third parts of $\beta=221$ to the second and third ones in $b'$ gives $b'=0010300200$, and adding the first part of $\beta=221$ to the first one in $b''$ gives $b''=0030100100$.
Finally, changing the zeros indexed by $2,6,7$ to ones in both $b'$ and $b''$ results in $b'=0110311200$ and $b''=0130111100$.

(ii) For $n=17$ and $k=1$ we have a composition $\alpha=2134115$ counted by $\ac_+^k(n)$ with $S=\{1,3\}$, $\beta=32$, $\gamma=2114112$, $i=1$, $j=4$, $r=5$, $b'=011300$, $b''=041100$, and $2r+2k+i+j=17=n$.
The construction of $(b',b'')$ is given below, with $T=\{1,2,4,5,6\}$.
\[ 001000 \to 001200 \to 001300 \to 011300 \to 011300 \to 011300 \]
\[ 001000 \to 001100 \to 001100 \to 021100 \to 031100 \to 041100\]

(iii) For $n=6$ and $k=0$ we have $\ac_+(6,0) = 11$ anti-palindromic compositions:
\begin{center}
\begin{tabular}{CCCCC}
r & i & j & \alpha & (b, b') \\ 
\hline
1 & 0 & 4 & 15 & (1, 5) \\
\hline
1 & 1 & 3 & 51 & (5, 1) \\
\hline
2 & 0 & 2 & 24 & (01, 03) \\
\hline
2 & 0 & 2 & 123 & (10, 30) \\
\hline
2 & 0 & 2 & 1122 & (11, 22) \\
\end{tabular} 
\qquad
\begin{tabular}{CCCCC}
r & i & j & \alpha & (b, b') \\ 
\hline
2 & 1 & 1 & 42 & (03, 01) \\
\hline
2 & 1 & 1 & 321 & (30, 10) \\
\hline
2 & 1 & 1 & 1212 & (12, 12) \\
\hline
2 & 1 & 1 & 2121 & (21, 21) \\
\hline
2 & 2 & 0 & 2211 & (22, 11) \\
\hline
3 & 0 & 0 & 6 & (000, 000) 
\end{tabular} 

\end{center}
\end{example}

We provide two more formulas for $\ac_+^k(n)$ next.

\begin{theorem}
For $n,k\ge0$ we have
\begin{align}
\ac_+^k(n) &= \sum_{2r+i+j=n-2k} 2^i \binom{r+k}{k} \binom{r}{i} \binom{i+j-1}{j} \label{eq:ac2} \\
&= \sum_{i+j+r+2s=n-2k} (-1)^i \binom{k+1}{i} \binom{j+k}{j} \binom{j}{r+s} \binom{ r+s}{r}. \label{eq:ac3} 
\end{align}
\end{theorem}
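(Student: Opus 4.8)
The plan is to derive both formulas from the single-variable generating function for $\ac_+^k(n)$ already obtained in the proof of Theorem~\ref{thm:pc}, namely
\[ \sum_{n\ge0} \ac_+^k(n) q^n = \frac{(1-q)^{k+1}q^{2k}}{(1-q-q^2-q^3)^{k+1}} = \frac{q^{2k}}{\bigl(1-q^2(1+q)/(1-q)\bigr)^{k+1}}, \]
by expanding the same rational function in two different ways and extracting the coefficient of $q^n$. In each case the only inputs are the negative binomial series $(1-x)^{-p}=\sum_{\ell\ge0}\binom{\ell+p-1}{\ell}x^\ell$ and the finite binomial theorem, applied to suitable inner and outer factors; the bookkeeping of the resulting exponents of $q$ will produce exactly the stated index constraints $2r+i+j=n-2k$ and $i+j+r+2s=n-2k$.

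For \eqref{eq:ac2} I would start from the second form of the generating function and set $x=q^2(1+q)/(1-q)$. Writing $\frac{1}{(1-x)^{k+1}}=\sum_{r\ge0}\binom{r+k}{k}x^r$ and then $x^r=q^{2r}\bigl(1+\tfrac{2q}{1-q}\bigr)^r$, I expand $\bigl(1+\tfrac{2q}{1-q}\bigr)^r=\sum_{i=0}^r\binom{r}{i}(2q)^i(1-q)^{-i}$ by the binomial theorem and finally $(1-q)^{-i}=\sum_{j\ge0}\binom{i+j-1}{j}q^j$. Multiplying by the prefactor $q^{2k}$ and collecting the total exponent $q^{2k+2r+i+j}$ gives \eqref{eq:ac2} after renaming $n=2k+2r+i+j$. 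One should check the boundary case $i=0$ against the convention for $\binom{a}{b}$ stated in the introduction, so that the factor $\binom{i+j-1}{j}$ contributes only $j=0$ when $i=0$.

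For \eqref{eq:ac3} I would instead use the first form. Expand $(1-q)^{k+1}=\sum_{i}(-1)^i\binom{k+1}{i}q^i$, which supplies the alternating factor $(-1)^i\binom{k+1}{i}$, and $\frac{1}{(1-q-q^2-q^3)^{k+1}}=\sum_{j\ge0}\binom{j+k}{j}(q+q^2+q^3)^j$. The key manipulation is to factor $(q+q^2+q^3)^j=q^j(1+q+q^2)^j$ and then expand the trinomial power in two nested stages: first $(1+q+q^2)^j=\sum_{t}\binom{j}{t}(q+q^2)^t$, and then $(q+q^2)^t=\sum_{r+s=t}\binom{t}{r}q^{r}q^{2s}$; writing $t=r+s$ converts $\binom{j}{t}\binom{t}{r}$ into $\binom{j}{r+s}\binom{r+s}{r}$, exactly the product appearing in \eqref{eq:ac3}. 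Collecting the exponent $q^{2k+i+j+r+2s}$ yields the constraint $i+j+r+2s=n-2k$. I expect the main (though modest) obstacle to be this two-step trinomial expansion: one must recognize that grouping $1+q+q^2$ as $1+(q+q^2)$ and then splitting $q+q^2$ is what reproduces the nested binomial coefficients, and one must confirm that the index ranges ($0\le i\le k+1$ and $0\le r+s\le j$) are automatically enforced by the same $\binom{a}{b}$ convention rather than imposed by hand. Everything else is routine reindexing.
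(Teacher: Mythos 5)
Your proposal is correct and takes essentially the same route as the paper: both formulas come from expanding the generating function $\frac{(1-q)^{k+1}q^{2k}}{(1-q-q^2-q^3)^{k+1}} = \frac{q^{2k}}{\bigl(1-q^2(1+q)/(1-q)\bigr)^{k+1}}$ in exactly the two ways you describe, including the nested split of $(q+q^2+q^3)^j$ as $q^j\bigl(1+(q+q^2)\bigr)^j$ that produces $\binom{j}{r+s}\binom{r+s}{r}$. The paper additionally records a bijective proof of \eqref{eq:ac2} via the correspondence $\alpha\leftrightarrow(b',b'')$ from Theorem~\ref{thm:pc}, which you do not attempt, but the analytic derivation is identical.
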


\begin{proof}
We can rewrite the generating function~\eqref{eq:AC} of $\ac_+^k(n)$ as
\begin{align*}  
\sum_{n,\, k\ge0} \ac_+^k(n) q^n t^k 
&= \frac{1}{1-q^2(2q/(1-q)+1+t)} 
= \sum_{r,k\ge0} q^{2(r+k)} \binom{r+k}{k}\binom{r}{i}\frac{(2q)^i}{(1-q)^i} t^k.
\end{align*}
Extracting the coefficient of $q^n t^k$ gives the formula~\eqref{eq:ac2}.
This formula can also be proved via the same bijection $\alpha\leftrightarrow(b',b'')$ as in the combinatorial proof of Theorem~\ref{thm:pc} but using a slightly different construction of $b'$ and $b''$:
\begin{itemize}
\item[(i)]
Let $b'$ and $b''$ be the same binary sequence with $r$ zeros indexed by a set $T$ and $k$ ones. 
\item[(ii)] 
Pick $i$ indices from $T$ without repetition and for each of them, add $1$ to the corresponding entry in $b'$ and add $2$ to the corresponding entry in $b''$, or the other way around.
\item[(iii)]
Among the $2$'s in $b'$ and $b''$, choose $j$ of them with repetition and add $1$ each time.
\end{itemize} 

We also have
\begin{align*}  
\sum_{n,\, k\ge0} \ac_+^k(n) q^n t^k 
&= \sum_{k\ge0} \frac{(1-q)^{k+1} q^{2k}t^k}{(1-q-q^2-q^3)^{k+1}} \\
&= \sum_{k\ge0} q^{2k} \sum_{i\ge0} \binom{k+1}{i} (-q)^i \sum_{j\ge0} \binom{j+k}{j} (q+q^2+q^3)^j t^k 
\end{align*}
which implies the formula~\eqref{eq:ac3}.
\end{proof}

The formula~\eqref{eq:pc} for $\pc_+^k(n)$ implies a formula for $\pc^k(n) = \pc_+^k(n) + \pc_+^k(n-1)$ by~\eqref{eq:pc+-}.
In particular, we have $\pc_+^0(n) = 2^{n/2}$ if $n$ is even or $\pc_+^0(n) = 0$ otherwise, and this implies $\pc(n) = 2^{\lfloor n/2 \rfloor}$.
We also have 
\begin{align*}  
\pc_+^1(n) &= \sum_{j=0}^{\lfloor (n-3)/2\rfloor} (j+1)2^{j+1}
= 2+(\lceil n/2 \rceil-2)2^{\lceil n/2 \rceil} 
\end{align*}
for $n\ge0$, which is essentially the same as~\cite[A036799]{OEIS}. 

On the other hand, the formulas~\eqref{eq:ac1}, \eqref{eq:ac2}, \eqref{eq:ac3} for $\ac_+^k(n)$ give formulas for $\ac^k(n) = \ac_+^k(n) + \ac_+^k(n-1)$ by~\eqref{eq:ac+-}.
For $k=0$, we have
\begin{align*}
\ac_+(n) &= \sum_{2r+i+j=n} \binom{r}{i} \binom{r+j-1}{j} 
= \sum_{2r+i+j=n} 2^i \binom{r}{i} \binom{i+j-1}{j} \\
&= \sum_{j+r+2s=n} \binom{j}{r+s} \binom{r+s}{r} - \sum_{j+r+2s=n-1} \binom{j}{r+s} \binom{ r+s}{r}
\end{align*}
which equals the tribonacci number $T'_{n+1}$ with initial conditions $T'_0=0, T'_1=1, T'_2=0$~\cite[A001590]{OEIS}.
This implies $\ac(n) = T'_{n+1}+T'_n$ and it is different from the formula $\ac(n) = T_{n}+T_{n-2}$ obtained by Andrews, Just and Simay~\cite{AntiPalComp}, although they can be derived from each other via the relations $T'_{n+1}=T_{n-1}+T_{n-2}$ and $T'_n=T_{n}-T_{n-1}$~\cite[A001590]{OEIS}.
We provide one more formula for $\ac^k(n)$ below, which reduces to another formula $\ac(n) = T_{n+1}-T_{n-1}$ when $k=0$.

\begin{theorem}
For $n,k\ge0$ we have
\begin{align}\label{eq:ac4}
\ac^k(n) &= \sum_{i+j+r+s=n-2k} (-1)^i \binom{k}{i} \binom{j+k}{j} \binom{j}{r} \binom{r}{s} - \sum_{i+j+r+s=n-2k-2} (-1)^i \binom{k}{i} \binom{j+k}{j} \binom{j}{r} \binom{r}{s}.
\end{align}
\end{theorem}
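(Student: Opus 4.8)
The plan is to prove~\eqref{eq:ac4} analytically, exactly in the spirit of the generating function derivations of~\eqref{eq:ac1}--\eqref{eq:ac3}. The only genuinely new step is to pass from $\ac_+^k(n)$ to $\ac^k(n)$ at the level of generating functions via~\eqref{eq:ac+-}; everything else is choosing the factorization and the binomial expansions that produce precisely the four binomial coefficients and the difference of two sums appearing in~\eqref{eq:ac4}.

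First I would record the generating function for $\ac^k(n)$. Since $\ac^k(n)=\ac_+^k(n)+\ac_+^k(n-1)$ by~\eqref{eq:ac+-}, multiplying the generating function~\eqref{eq:AC} of $\ac_+^k(n)$ by $1+q$ gives
\[ \sum_{n,\,k\ge0}\ac^k(n)q^nt^k = \frac{(1+q)(1-q)}{1-q-q^2-q^3-(1-q)q^2t}. \]
Extracting the coefficient of $t^k$ just as in the proof of Theorem~\ref{thm:pc} (the factor $1+q$ is independent of $t$) yields
\[ \sum_{n\ge0}\ac^k(n)q^n = \frac{(1+q)(1-q)^{k+1}q^{2k}}{(1-q-q^2-q^3)^{k+1}} = (1-q^2)q^{2k}\,\frac{(1-q)^k}{(1-q-q^2-q^3)^{k+1}}. \]
The decisive move is to split $1-q^2=(1-q)(1+q)$ so that one factor of $1-q$ is absorbed into $(1-q)^{k+1}$, leaving $(1-q)^k$ inside and the spare $1-q^2$ out front; this is what turns the $\binom{k+1}{i}$ of~\eqref{eq:ac3} into $\binom{k}{i}$ and, simultaneously, accounts for the second, shifted sum in~\eqref{eq:ac4}.

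Next I would expand the remaining factor. Using $(1-q)^k=\sum_i(-1)^i\binom{k}{i}q^i$ together with the standard identity $\frac{1}{(1-x)^{k+1}}=\sum_j\binom{j+k}{j}x^j$ at $x=q+q^2+q^3$, and then the nested grouping
\[ (q+q^2+q^3)^j = q^j\bigl(1+q(1+q)\bigr)^j = \sum_{r,s}\binom{j}{r}\binom{r}{s}q^{j+r+s}, \]
I obtain
\[ \frac{(1-q)^k}{(1-q-q^2-q^3)^{k+1}} = \sum_{i,j,r,s}(-1)^i\binom{k}{i}\binom{j+k}{j}\binom{j}{r}\binom{r}{s}\,q^{i+j+r+s}. \]
Multiplying by $(1-q^2)q^{2k}$ and extracting the coefficient of $q^n$ then produces the coefficient summed over $i+j+r+s=n-2k$ (from the term $q^{2k}$) minus the coefficient summed over $i+j+r+s=n-2k-2$ (from the term $-q^{2k+2}$), which is precisely~\eqref{eq:ac4}.

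The computation is routine once the bookkeeping is fixed, so I expect the main obstacle to be purely organizational: selecting the two groupings that match the target. Concretely, the factor $1-q^2$ must be split as $(1-q)(1+q)$ rather than kept intact (otherwise one merely re-derives a $\binom{k+1}{i}$-type formula as in~\eqref{eq:ac3}), and the trinomial must be grouped as $1+q(1+q)$ rather than as $1+(q+q^2)$, since only the former yields the telescoping pair $\binom{j}{r}\binom{r}{s}$. As a sanity check, setting $k=0$ collapses the expression to $(1-q^2)/(1-q-q^2-q^3)=\sum_n(T_{n+1}-T_{n-1})q^n$, recovering $\ac(n)=T_{n+1}-T_{n-1}$ as stated. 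A combinatorial proof via inclusion-exclusion, as anticipated in the text, would presumably read $(-1)^i\binom{k}{i}$ as a sieve over the $k$ pairs forced to be equal, but I would establish the formula analytically and leave the bijective interpretation open.
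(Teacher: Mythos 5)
Your proposal is correct and follows essentially the same route as the paper: the paper likewise writes $\sum_{n,k}\ac^k(n)q^nt^k=\frac{1-q^2}{1-q-q^2-q^3-(1-q)q^2t}$ (i.e., $(1+q)$ times~\eqref{eq:AC}), expands it as $(1-q^2)\sum_{j,k}\binom{j+k}{j}(q+q^2+q^3)^j(1-q)^kq^{2k}t^k$, and extracts coefficients, which is exactly your computation with the $t^k$-extraction and negative binomial series made explicit. Your bookkeeping (splitting $1-q^2=(1-q)(1+q)$ to get $\binom{k}{i}$, grouping $q+q^2+q^3=q(1+q(1+q))$ to get $\binom{j}{r}\binom{r}{s}$, and the $k=0$ sanity check) is all consistent with the stated formula.
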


\begin{proof}
We obtain the generating function of $\ac^k(n)$ from~\eqref{eq:AC}:
\begin{align*}  
\sum_{n,\, k\ge0} \ac^k(n) q^n t^k &= \frac{1-q^2}{1-q-q^2-q^3-(1-q)q^2t} \\
&= (1-q^2) \sum_{j,k\ge0} \binom{j+k}{j} (q+q^2+q^3)^j (1-q)^k q^{2k} t^k.
\end{align*}
Extracting the coefficient of $q^n t^k$ gives the desired formula for $\ac^k(n)$.
\end{proof}

\begin{remark}
There might be a combinatorial proof for the formula~\eqref{eq:ac4} of $\ac^k(n)$ via inclusion-exclusion.
For $k=0$ this formula becomes $\ac(n)=T_{n+1}-T_{n-1}$, giving a byproduct 
\[ T_{n+1} = \sum_{j+r+s=n} \binom{j}{r} \binom{r}{s}. \]
The above formula of $T_{n+1}$ is not found in OEIS~\cite[A000073]{OEIS} but has a simple bijective proof.
In fact, it is known that the number of compositions of $n$ with no part greater than $3$ is $T_{n+1}$~\cite[A000073]{OEIS}.
Given such a composition, let $j, r, s$ be the number of parts greater than $0, 1, 2$, respectively.
Then the $r$ parts greater than $1$ are among those $j$ parts greater than $0$, the $s$ parts greater than $2$ are among those $r$ parts greater than $1$, and $j+r+s=n$.
This combinatorially proves the above formula of $T_{n+1}$.
\end{remark}

\section{Reduced partially palindromic/anti-palindromic compositions}\label{sec:rpc}

Andrews, Just and Simay~\cite{AntiPalComp} found a formula for the number $\rac(n)$ of reduced anti-palindromic compositions of $n$, where \emph{reduced anti-palindromic compositions} are equivalence classes of anti-palindromic compositions under swaps of the first and last parts, the second and second last parts, and so on.
We extend this definition and let $\rpc^k(n)$ (resp., $\rac^k(n)$) denote the number of equivalence classes of compositions counted by $\pc^k(n)$ (resp., $\ac^k(n)$) under the aforementioned swaps.
Taking $k=0$ gives $\rpc^0(n) = \rpc(n)$ and $\rac^0(n) = \rac(n)$.
We define $\rpc^k_+(n)$, $\rpc^k_-(n)$, $\rac^k_+(n)$, and $\rac^k_-(n)$ similarly.

\begin{proposition}
For $n,k\ge0$, $\rpc^k_+(n) = \pc^k_+(n)/2^k$, $\rpc^k_-(n) = \pc^k_-(n)/2^k$, and $\rpc^k(n) = \pc^k(n)/2^k$. 
\end{proposition}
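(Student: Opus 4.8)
The plan is to realize each of $\rpc^k_+(n)$, $\rpc^k_-(n)$, and $\rpc^k(n)$ as the number of orbits of a group action, and then to show that every orbit has exactly $2^k$ elements, so that the orbit count is the total count divided by $2^k$. For a fixed length $\ell$, the allowed moves (swapping $\alpha_i$ with $\alpha_{\ell+1-i}$ for $1\le i\le \lfloor\ell/2\rfloor$) are commuting involutions, so they generate an action of $(\ZZ/2\ZZ)^{\lfloor\ell/2\rfloor}$ on the compositions of $n$ of length $\ell$, and the equivalence classes defining $\rpc^k$ are precisely the orbits of this action.

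First I would check that the action restricts to the relevant sets. Swapping $\alpha_i$ with $\alpha_{\ell+1-i}$ leaves the multiset $\{\alpha_i,\alpha_{\ell+1-i}\}$ unchanged, hence preserves the length $\ell$, the middle part $\alpha_{(\ell+1)/2}$ when $\ell$ is odd (including its parity), and the statistic $\#\{i:\alpha_i\ne\alpha_{\ell+1-i}\}=k$. Consequently the swaps act on each of the three sets counted by $\pc^k_+(n)$, $\pc^k_-(n)$, and $\pc^k(n)$ separately, and their orbit counts are exactly $\rpc^k_+(n)$, $\rpc^k_-(n)$, and $\rpc^k(n)$.

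The substantive step is the orbit-size computation. Fix $\alpha$ with exactly $k$ unequal pairs. A swap on a pair with $\alpha_i=\alpha_{\ell+1-i}$ fixes $\alpha$, while a swap on an unequal pair changes it; since the pairs occupy disjoint index positions, the image of $\alpha$ under any combination of swaps is determined precisely by which of the $k$ unequal pairs get reversed. Thus the stabilizer of $\alpha$ is exactly the subgroup of swaps supported on the $\lfloor\ell/2\rfloor-k$ equal pairs, and orbit--stabilizer gives an orbit of size $2^{\lfloor\ell/2\rfloor}/2^{\lfloor\ell/2\rfloor-k}=2^k$, independent of $\alpha$ and of $\ell$. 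Since every orbit has the same size $2^k$, dividing the total count by $2^k$ yields $\rpc^k_+(n)=\pc^k_+(n)/2^k$ and likewise for the $-$ and the unrefined versions; the last identity also follows by summing the first two together with $\pc^k=\pc^k_++\pc^k_-$.

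The \emph{only} genuinely nontrivial point is this constancy of the orbit size, namely that reversing distinct subsets of the $k$ unequal pairs produces distinct compositions — equivalently, the independence of the swaps across disjoint position-pairs; everything else is bookkeeping. I would also record the boundary cases, since $k=0$ makes every orbit a singleton (so $\rpc^0=\pc^0$) and the $n=0$ convention $\pc^0(0)=1$ is handled automatically.
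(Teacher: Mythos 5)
Your proof is correct and is essentially the argument the paper has in mind: the paper simply states that the result ``follows immediately from the definition,'' and your orbit--stabilizer computation (each equivalence class has exactly $2^k$ elements because the $k$ unequal pairs can be flipped independently while the swaps preserve length, the middle part, and the statistic $k$) is the fully spelled-out version of that one-line justification.
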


\begin{proof}
This result follows immediately from the definition.
\end{proof}

\begin{theorem}\label{thm:rac}
For $n\ge0$, we have 
\[ \rac_+^k(n) = \sum_{2r+j=n-2k} \binom{r+k}{r} \binom{r+j-1}{j} \]
which is also the number of compositions of $n-k$ with exactly $k$ parts equal to $1$~\cite[A105422]{OEIS}.
\end{theorem}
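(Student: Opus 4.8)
The plan is to derive a generating function for $\rac_+^k(n)$ by adapting the derivation for $\ac_+^k(n)$ in Theorem~\ref{thm:pc}, then extract coefficients; the second assertion will follow from a direct combinatorial reading of the resulting formula. The reduction to equivalence classes is governed by the swaps, and the key observation is that these act independently on the symmetric pairs: a matched pair (with $\alpha_h=\alpha_{\ell+1-h}$) is fixed by its swap, whereas a mismatched pair (with $\alpha_h\ne\alpha_{\ell+1-h}$) has an orbit of size two. Thus an equivalence class counted by $\rac_+^k(n)$ is specified by a sequence of $k$ matched pairs (each a value $a\ge1$ of size $2a$) together with arbitrarily many \emph{unordered} mismatched pairs (each a two-element set $\{a,b\}$ of size $a+b$), plus an optional even middle part.

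First I would record the per-pair generating function. Writing $E=q^2+q^4+\cdots$ and $P=q+q^2+\cdots$, an ordered mismatched pair has generating function $P^2-E$, so an unordered one contributes $\tfrac12(P^2-E)$, a genuine power series in $q$ with nonnegative integer coefficients. Marking matched pairs by $t$ and inserting the factor $1/(1-q^2)$ for the optional even middle yields
\[ \sum_{n,k\ge0}\rac_+^k(n)\,q^nt^k=\frac{1}{1-\bigl(tE+\tfrac12(P^2-E)\bigr)}\cdot\frac{1}{1-q^2}. \]
A short simplification turns this into $\dfrac{1-q}{1-q-(1+t)q^2+tq^3}$; extracting the coefficient of $t^k$ gives
\[ \sum_{n\ge0}\rac_+^k(n)\,q^n=\frac{(1-q)^{k+1}q^{2k}}{(1-q-q^2)^{k+1}}. \]
This is exactly the generating function for $\ac_+^k(n)$ from Theorem~\ref{thm:pc} with the term $q^3$ deleted from the denominator, reflecting the halving $P^2-E=2q^3/((1-q)^2(1+q))\mapsto\tfrac12(P^2-E)$. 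Expanding $\bigl((1-q)/(1-q-q^2)\bigr)^{k+1}$ via $\sum_r\binom{r+k}{r}(q^2/(1-q))^r$ and $1/(1-q)^r=\sum_j\binom{r+j-1}{j}q^j$ and reading off $[q^n]$ produces the claimed formula.

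For the second assertion I would read the formula combinatorially rather than manipulate another generating function. A composition of $n-k$ with exactly $k$ parts equal to $1$ is obtained by choosing $r\ge0$ parts that are $\ge2$, interleaving them with the $k$ ones in $\binom{r+k}{r}$ ways, and selecting the $r$ large parts so that they sum to $(n-k)-k=n-2k$; writing each such part as $2$ plus a nonnegative excess leaves $j=n-2k-2r$ units to distribute among the $r$ parts in $\binom{r+j-1}{j}$ ways. Summing over $2r+j=n-2k$ reproduces the formula verbatim, establishing the stated equality.

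The main obstacle is the rigorous justification of the halving step, namely that the objects counted by $\rac_+^k(n)$ biject with sequences of matched values and unordered mismatched pairs (plus middle). This rests on the fact that the swap group acts coordinatewise on the pairs with orbits of size $2^{|S|}$, where $S$ indexes the mismatched positions, so that fixing an unordered representative for each mismatched pair selects exactly one element per orbit; the only mild point to verify is that $\tfrac12(P^2-E)$ is integral, which holds because each unordered mismatched value $\{a,b\}$ is counted twice in $P^2-E$.
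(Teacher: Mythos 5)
Your proposal is correct and follows essentially the same route as the paper: the author also forms the per-pair generating function with equal pairs marked by $t$ and unequal pairs weighted by $\tfrac12$ (written there as $1/(1-tG(q,1/2t))$ times $1/(1-q^2)$), extracts $[t^k]$ to get $(1-q)^{k+1}q^{2k}/(1-q-q^2)^{k+1}$, and expands to the stated sum, and the identification with compositions of $n-k$ having exactly $k$ parts equal to $1$ is proved by the same ``interleave $k$ ones with $r$ parts $\ge 2$ and distribute the excess'' bijection. Your explicit justification of the halving via orbit sizes $2^{|S|}$ is a welcome elaboration of a step the paper leaves implicit, but it is not a different argument.
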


\begin{proof}
Similarly to the proof of Theorem~\ref{thm:pc}, 
the generating function for $\rac_+^k(n)$ is
\begin{align*}
\sum_{n,\, k\ge0} \rac_+^k(n) q^n t^k & = \frac{1}{1-tG(q,1/2t)} \frac{1}{1-q^2} 
= \frac{1-q}{1-q-q^2-(1-q)q^2t}.
\end{align*}
Extracting the coefficient of $t^k$ gives
\begin{align*} 
\sum_{n\ge0} \rac_+^k(n) q^n 
&= \frac{(1-q)^{k+1}q^{2k}}{(1-q-q^2)^{k+1}} 
= \frac{q^{2k}}{(1-q^2/(1-q))^{k+1}}.
\end{align*}
This implies the desired formula of $\rac_+^k(n)$.
We can also prove this formula by modifying the combinatorial proof of the formula~\eqref{eq:ac2} of $\ac_+^k(n)$, where the $i$ indices picked in step (ii) correspond to the swaps of parts used to define $\rac_+^k(n)$.

Finally, comparing the generating function of $\rac_+^k(n)$ with the generating function of~\cite[A105422]{OEIS} shows that $\rac_+^k(n)$ is also the number of compositions of $n-k$ with exactly $k$ parts equal to $1$.
This can also be proved bijectively.
In fact, given a composition of $n-k$ with $k$ parts equal to $1$ and $r$ parts greater than $1$, subtracting $1$ from each part gives a composition of $n-2k-r = r+j$ for some $j\ge0$ with $r$ parts.
Conversely, if $2r+j=n-2k$ then there are $\binom{r+j-1}{j}$ many compositions of $r+j$ with $r$ parts, each corresponding to $\binom{r+k}{r}$ many compositions of $n-k$ with $k$ parts equal to $1$ and $r$ parts greater than $1$.
\end{proof}

Some special cases of Theorem~\ref{thm:rac} are given below.

\begin{itemize}
\item
We have $\rac_+^0(0)=1$ and $\rac_+^0(n) = F_{n-1}$~\cite[A212804]{OEIS}.
Consequently, $\rac(0)=1$ and $\rac(n) = F_n$ for $n\ge1$~\cite[A324969]{OEIS}.

\item
For $n\ge0$, $\rac_+^1(n) = \sum_{0\le r\le (n-2)/2} (r+1) \binom{n-r-3}{n-2r-2}$ is the number of compositions of $n-1$ with exactly one part equal to $1$~\cite[A006367]{OEIS}, and $\rac^1(n)$ is the number of compositions of $n-2$ with at most one even part~\cite[A208354]{OEIS}.

\item
For $n\ge0$, $\rac_+^2(n) = \sum_{2r+j=n-4} \binom{r+2}{2} \binom{r+j-1}{j}$ is the number of compositions of $n-2$ with exactly two parts equal to $1$~\cite[A105423]{OEIS}.
\end{itemize}

\begin{remark}
It would be interesting to see whether $\rac^1(n) = \rac^1_+(n) + \rac^1_+(n-1)$ can be proved bijectively using compositions with exactly one part equal to $1$ and compositions with at most one even part.
One may also notice that $\rac(n)$ equals the number of the number of compositions of $n$ with no even parts. 
By checking the generating functions we are convinced that the connection between $\rac^k(n)$ and compositions of $n$ with at most $k$ even parts is valid only for $k=0,1$.
\end{remark}

\section{Partially palindromic compositions modulo $m$}\label{sec:pcm}

Let $m$ be a positive integer throughout the rest of the paper. 
In this section we obtain closed formulas for $\pc_+^k(n,m)$, which imply formulas for $\pc^k(n,m)$ by~\eqref{eq:pc+-}.
\begin{theorem}\label{thm:pcm}
For $n,k\ge0$ we have
\begin{align}\label{eq:pcm1}
\pc_+^k(n,m) =& \sum_{2i+mj+(m-1)r+s = n-k} (-1)^r 2^i \binom{i}{k}\binom{i+j-1}{j}\binom{k}{r}\binom{k+s-1}{s}  \\
=& \sum_{\substack{ i_0+i_1+\cdots+i_{m-2} = k \\ 2i+mj+i_1+2i_2+\cdots+(m-2)i_{m-2} = n-k} } 
2^{i} \binom{i}{k}\binom{i+j-1}{j}\binom{k}{i_0,i_1,\ldots,i_{m-2}}. \label{eq:pcm2}
\end{align}
\end{theorem}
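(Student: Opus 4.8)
The plan is to follow the analytic proof of Theorem~\ref{thm:pc}, deriving the bivariate generating function $\sum_{n,k\ge0}\pc_+^k(n,m)q^nt^k$ by pairing the $h$th part with the $(\ell+1-h)$th part and handling the (necessarily even) middle part separately. The only new feature is that a pair $(\alpha_h,\alpha_{\ell+1-h})$ now contributes a factor of $t$ precisely when $\alpha_h\not\equiv\alpha_{\ell+1-h}\pmod m$. First I would compute the generating function for a single congruent pair,
\[ C(q)=\sum_{r=0}^{m-1}\Bigl(\sum_{\substack{a\ge1\\ a\equiv r\,(m)}}q^a\Bigr)^2=\frac{q^2(1+q^m)}{(1-q^2)(1-q^m)}, \]
and for a single non-congruent pair, $N(q)=\frac{q^2}{(1-q)^2}-C(q)=\frac{2q^2(q+q^2+\cdots+q^{m-1})}{(1-q^2)(1-q^m)}$. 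The middle-part / no-middle-part factor is again $\frac{1}{1-q^2}$, exactly as in Theorem~\ref{thm:pc}.

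With these in hand the bivariate generating function is $\frac{1}{1-C(q)-tN(q)}\cdot\frac{1}{1-q^2}$, and extracting the coefficient of $t^k$ gives $\frac{N(q)^k}{(1-C(q))^{k+1}}\cdot\frac{1}{1-q^2}$. A short computation shows $1-C(q)=\frac{1-2q^2-q^m}{(1-q^2)(1-q^m)}$, whose numerator is exactly the denominator of Just's generating function, and after cancellation I expect to obtain
\[ \sum_{n\ge0}\pc_+^k(n,m)q^n=\frac{2^kq^{2k}(q+q^2+\cdots+q^{m-1})^k(1-q^m)}{(1-2q^2-q^m)^{k+1}}. \]
A useful sanity check is $k=0$, where this reduces to $\frac{1-q^m}{1-2q^2-q^m}$; combined with \eqref{eq:pc+-} this recovers Just's formula for $\pc(n,m)$, and the cases $m=1,2$ can be verified directly against $C(q)$ and $N(q)$.

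From this single generating function both stated formulas follow by expanding the two moving pieces. For the rational factor I would write $1-2q^2-q^m=(1-q^m)-2q^2$, factor out $1-q^m$, and expand by the binomial series twice to get
\[ \frac{1-q^m}{(1-2q^2-q^m)^{k+1}}=\sum_{i',\,j}2^{i'}\binom{i'+k}{k}\binom{i'+j+k-1}{j}q^{2i'+mj}. \]
The two formulas then differ only in how $(q+q^2+\cdots+q^{m-1})^k$ is expanded. The multinomial theorem gives $\sum_{i_0+\cdots+i_{m-2}=k}\binom{k}{i_0,\ldots,i_{m-2}}q^{k+i_1+2i_2+\cdots+(m-2)i_{m-2}}$, yielding the positive sum \eqref{eq:pcm2}; writing $q+\cdots+q^{m-1}=q(1-q^{m-1})/(1-q)$ and expanding $(1-q^{m-1})^k$ and $(1-q)^{-k}$ instead gives $q^k\sum_{r,s}(-1)^r\binom{k}{r}\binom{k+s-1}{s}q^{(m-1)r+s}$, yielding the alternating sum \eqref{eq:pcm1}. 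Multiplying by the prefactor $2^kq^{2k}$ and collecting powers of $q$ finishes both.

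The main bookkeeping obstacle is the final reindexing. The expansion of the rational factor naturally produces $\binom{i'+k}{k}$, $\binom{i'+j+k-1}{j}$, and $2^{i'}$, whereas the theorem records $\binom{i}{k}$, $\binom{i+j-1}{j}$, and $2^i$; these are reconciled by the substitution $i=i'+k$, which simultaneously absorbs the prefactors $2^k$ and $q^{2k}$ and converts the raw exponent relation $2i'+mj+\cdots=n-3k$ into the stated $2i+mj+\cdots=n-k$. Checking that this one substitution lines up the power of two, both binomial factors, and the exponent constraint at once is the only genuinely delicate point; everything else is routine geometric and multinomial expansion.
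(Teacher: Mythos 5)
Your proposal is correct and follows essentially the same route as the paper: both compute the pair generating function split into congruent and non-congruent pairs (your $C(q)+tN(q)$ is the paper's $G_m(q,t)$), form $\frac{1}{1-G_m(q,t)}\cdot\frac{1}{1-q^2}$, extract the coefficient of $t^k$, and expand by geometric/binomial/multinomial series. The only difference is cosmetic bookkeeping — you isolate the $t^k$ coefficient before expanding and then reindex via $i=i'+k$, whereas the paper expands the bivariate series directly so that the index $i\ge k$ appears from the start; your reindexing check is carried out correctly.
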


\begin{proof}
Given a composition, each part equals its least positive residue modulo $m$ plus a nonnegative multiple of $m$.
We pair the first part with the last part, the second part with the second last part, and so on. 
If the middle part exists, then it is not paired with any other part.  
For each pair, their smallest positive residues modulo $m$ may be equal or unequal. 
Thus the generating function for each pair is
\begin{align*}
G_m (q,t) & = (q^2+q^4+\cdots+q^{2m}+t((q+q^2+\cdots+q^m)^2-q^2-q^4-\cdots-q^{2m})(1+q^m+\cdots)^2 \\
\end{align*}
and the generating function for $\pc_+^k(n,m)$ is
\begin{align*}
\sum_{n,k\ge0} \pc_+^k(n,m) q^n t^k 
&= \frac{1}{1-G_m(q,t)}\frac{1}{1-q^2} \\
&= \frac{(1-q)(1-q^m)}{(1-q)(1-q^m)-2q^2(1-q+q(1-q^{m-1})t)} \\
&= \sum_{i\ge0} \frac{(2q^2)^i}{(1-q^m)^i} \left(1+\frac{q(1-q^{m-1})t}{1-q}\right)^i \\
&= \sum_{i\ge k\ge 0} \binom{i}{k} \frac{(2q^2)^i q^k(1-q^{m-1})^k t^k}{(1-q^m)^i(1-q)^k}.
\end{align*}
Extracting the coefficient of $q^n t^k$ gives the formula~\eqref{eq:pcm1} for $\pc_+^k(n,m)$.

We can also write $(1-q^{m-1})^k/(1-q)^k = (1+q+\cdots+q^{m-2})^k$ in the generating function of $\pc_+(n,m,k)$ and apply the multinomial theorem to obtain the formula~\eqref{eq:pcm2}.
\end{proof}

\begin{remark}
We can rewrite~\eqref{eq:pcm2} (and similar formulas appearing later) using symmetric functions.
Let $\lambda \subseteq (m-2)^k$ denote that $\lambda$ is an (integer) partition with at most $k$ parts, each no more than $m-2$.
Adding trailing zeros we can identify it with a decreasing sequence $\lambda=(\lambda_1,\ldots,\lambda_k)$ of $k$ integers in $\{0,1,\ldots,m-2\}$.
For $h=0,1,\ldots,m-2$, let $i_h$ be the number of parts of $\lambda\subseteq (m-2)^k$ that are equal to $h$.
We have $i_0+i_1+\cdots+i_{m-2}=k$ and $i_1+2i_2+\cdots+(m-2)i_{m-2}=|\lambda|$.
The \emph{monomial symmetric function} $m_\lambda(x_1,\ldots,x_k)$ is the sum of the monomials $x_1^{a_1}\cdots x_k^{a_k}$ for all rearrangements $(a_1,\ldots,a_k)$ of $(\lambda_1,\ldots,\lambda_k)$.
The formula~\eqref{eq:pcm2} becomes
\[ \pc_+^k(n,m) = \sum_{\substack{\lambda\subseteq(m-2)^k \\ 2i+mj+|\lambda|=n-k}}
2^{i} \binom{i}{k}\binom{i+j-1}{j} m_\lambda(1^k) \]
where $m_\lambda(1^k)$ is the evaluation of $m_\lambda(x_1,\ldots,x_k)$ at the vector $(1,\ldots,1)$ of length $k$.
\end{remark}

Taking $k=0$ in either~\eqref{eq:pcm1} or~\eqref{eq:pcm2} give the following: 
\begin{equation}\label{eq:pcm+}
\pc_+(n,m) = \sum_{2i+mj=n} 2^i \binom{i+j-1}{j}. 
\end{equation}
We can also obtain this by modifying the combinatorial proof of Theorem~\ref{thm:pc} with $|\beta|=mj$ and $|\gamma|=2i$.
Some known results follow. 

\begin{corollary}[{\cite{ParityPalComp,PalCompMod,RecursiveParity}; cf.~\cite[A025192]{OEIS}.}]
For $n\ge1$ we have $\pc(2n,2) = \pc(2n+1,2) = 2\cdot 3^{n-1}$.
\end{corollary}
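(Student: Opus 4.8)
The plan is to specialize the positive-sum formula~\eqref{eq:pcm+} to $m=2$ and then reassemble $\pc(\cdot,2)$ via the reduction~\eqref{eq:pc+-}. First I would observe that setting $m=2$ in~\eqref{eq:pcm+} imposes the constraint $2i+2j=n$, so $\pc_+(n,2)=0$ whenever $n$ is odd, and for even $n=2M$ the condition becomes $i+j=M$. Substituting $j=M-i$ and $i+j-1=M-1$ collapses the formula to the single sum $\pc_+(2M,2)=\sum_{i+j=M}2^i\binom{M-1}{j}$.

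The key computational step is to evaluate this sum by the binomial theorem. Reindexing by $j$ (equivalently by $i=M-j$) gives $\sum_{j=0}^{M}2^{M-j}\binom{M-1}{j}$. Here I would invoke the binomial convention adopted in the paper: the $j=M$ term carries $\binom{M-1}{M}=0$, so the sum effectively runs over $0\le j\le M-1$. Factoring out one power of $2$ leaves $2\sum_{j=0}^{M-1}\binom{M-1}{j}2^{M-1-j}=2\,(2+1)^{M-1}=2\cdot 3^{M-1}$. Thus $\pc_+(2M,2)=2\cdot 3^{M-1}$ for $M\ge1$, while $\pc_+(n,2)=0$ for odd $n$.

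Finally I would feed these values into~\eqref{eq:pc+-} with $k=0$ and $m=2$, namely $\pc(n,2)=\pc_+(n,2)+\pc_+(n-1,2)$. Replacing $n$ by $2n$ yields $\pc(2n,2)=\pc_+(2n,2)+\pc_+(2n-1,2)=2\cdot 3^{n-1}+0$, and replacing $n$ by $2n+1$ yields $\pc(2n+1,2)=\pc_+(2n+1,2)+\pc_+(2n,2)=0+2\cdot 3^{n-1}$. In each case exactly one of the two summands survives, by the parity vanishing established above, so both quantities equal $2\cdot 3^{n-1}$ for $n\ge1$, as claimed.

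There is essentially no serious obstacle here; the only point demanding care is the boundary behavior of the binomial coefficients under the paper's nonstandard convention — specifically confirming that the $i=0$ (equivalently $j=M$) term contributes $0$ rather than being mishandled — and correctly tracking which of the two summands in~\eqref{eq:pc+-} vanishes in the even versus odd case. Both are routine once the parity structure of $\pc_+(\cdot,2)$ is made explicit.
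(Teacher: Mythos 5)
Your proposal is correct and follows the paper's own route exactly: specialize the positive-sum formula~\eqref{eq:pcm+} to $m=2$, note that $\pc_+(n,2)$ vanishes for odd $n$, evaluate $\sum_i 2^i\binom{n-1}{n-i}=2\cdot 3^{n-1}$ by the binomial theorem, and combine via~\eqref{eq:pc+-}. The only difference is that you spell out the boundary terms and the final reassembly step that the paper leaves implicit.
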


\begin{proof}
Taking $m=2$ in~\eqref{eq:pcm+} gives $\pc_+^0(2n,2) = \sum_{i\ge0} 2^i \binom{n- 1}{n -i} = 2\cdot 3^{n-1}$ and $\pc_+^0(2n+1,2)=0$.
\end{proof}

\begin{corollary}[\cite{PalCompMod}]
(i) For $n\ge2$ we have $\pc(n,3) = 2F_{n-1}$.

\noindent
(ii) If $m$ is even then 
$\pc(2n,m)=\pc(2n+1,m)$.

\noindent
(iii) If $2n+1<m$ then $\pc(2n,m)=\pc(2n+1,m)=2^n$.
\end{corollary}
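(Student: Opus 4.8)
The plan is to derive all three parts from the positive-sum formula \eqref{eq:pcm+} for $\pc_+(n,m)$ together with the relation \eqref{eq:pc+-}, which reads $\pc(n,m)=\pc_+(n,m)+\pc_+(n-1,m)$. For part (i) I would work at the level of generating functions. Taking $k=0$ (i.e.\ setting $t=0$) in the generating function computed inside the proof of Theorem~\ref{thm:pcm} collapses it to $\sum_n \pc_+(n,3)q^n = \frac{1-q^3}{1-2q^2-q^3}$, and since \eqref{eq:pc+-} amounts to multiplication by $(1+q)$ on generating functions, we get $\sum_n \pc(n,3)q^n = (1+q)\frac{1-q^3}{1-2q^2-q^3}$. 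The key algebraic step is the factorization $1-2q^2-q^3 = (1+q)(1-q-q^2)$ (note $q=-1$ is a root of the denominator): after it the factor $(1+q)$ cancels and the series reduces to $\frac{1-q^3}{1-q-q^2}$. Reading off coefficients against the Fibonacci series $\frac{1}{1-q-q^2}=\sum_n F_{n+1}q^n$ gives $\pc(n,3)=F_{n+1}-F_{n-2}$ for $n\ge3$, and the Fibonacci identity $F_{n+1}=2F_{n-1}+F_{n-2}$ turns this into $2F_{n-1}$; the boundary case $n=2$ (where the $q^3$ numerator term is inert, leaving coefficient $F_3=2=2F_1$) is verified directly.

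For parts (ii) and (iii) I would argue straight from \eqref{eq:pcm+} rather than through generating functions, using a single support/parity observation: every term of $\pc_+(N,m)=\sum_{2i+mj=N}2^i\binom{i+j-1}{j}$ requires an index pair with $2i+mj=N$. When $m$ is even the quantity $2i+mj$ is always even, so $\pc_+(N,m)=0$ for all odd $N$. Substituting into $\pc(2n,m)=\pc_+(2n,m)+\pc_+(2n-1,m)$ and $\pc(2n+1,m)=\pc_+(2n+1,m)+\pc_+(2n,m)$ kills the odd-argument terms and shows both equal $\pc_+(2n,m)$, which gives (ii) immediately.

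For part (iii), the same formula shows that when $N<m$ any $j\ge1$ forces $mj\ge m>N$, so the only surviving index has $j=0$, i.e.\ $2i=N$; this contributes $2^{N/2}$ when $N$ is even and nothing when $N$ is odd. Under the hypothesis $2n+1<m$ this applies to all of $N\in\{2n-1,2n,2n+1\}$, yielding $\pc_+(2n,m)=2^n$ and $\pc_+(2n-1,m)=\pc_+(2n+1,m)=0$; feeding these into \eqref{eq:pc+-} gives $\pc(2n,m)=\pc(2n+1,m)=2^n$. I expect the only genuine care needed anywhere to be the bookkeeping of the small-$n$ edge cases and constant terms in part (i); parts (ii) and (iii) are essentially immediate once the parity-and-support remark about \eqref{eq:pcm+} is isolated, so there is no substantial obstacle.
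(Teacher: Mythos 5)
Your proposal is correct and follows essentially the same route as the paper: parts (ii) and (iii) are argued verbatim from the support of the sum in \eqref{eq:pcm+} together with \eqref{eq:pc+-}, and part (i) starts from the same generating function $\frac{1-q^3}{1-2q^2-q^3}$. The only (cosmetic) difference is that in (i) you cancel the factor $(1+q)$ via $1-2q^2-q^3=(1+q)(1-q-q^2)$ and read off $F_{n+1}-F_{n-2}=2F_{n-1}$ directly, whereas the paper first extracts $\pc_+^0(n,3)=2(F_{n-2}+(-1)^{n-2})$ and then applies the Fibonacci recurrence; both are valid.
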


\begin{proof}
(i) The proof of Theorem~\ref{thm:pcm} gives the generating function 
\begin{align*}
\sum_{n\ge0} \pc_+^0(n,3) q^n 
&= \frac{(1-q)(1-q^3)}{(1-q)(1-q^3)-2q^2(1-q)} 
= \frac{1-q^3}{1-2q^2-q^3}.
\end{align*}
Taking $m=3$ in~\eqref{eq:pcm+} gives $\pc_+^0(n,3) = \sum_{2i+3j=n} 2^i \binom{i+j-1}{j}$. 
Either way we have $\pc_+^0(0,3)=1$, $\pc_+^0(1,3)=1$, and $\pc_+^0(n,3)=2(F_{n-2}+(-1)^{n-2})$ for $n\ge2$ (cf.~\cite[A008346]{OEIS}).
It follows from~\eqref{eq:pc+-} and the recurrence of $F_n$ that $\pc(n,3) =  \pc_+^0(n,3) + \pc_+^0(n-1,3) = 2 F_{n-1}$. 

(ii) If $m$ is even then $\pc_+^0(n,m)=0$ for all odd $n$ by~\eqref{eq:pcm+}.
Thus (ii) follows from~\eqref{eq:pc+-}.

(iii) If $2n+1<m$ then a summand in $\pc_+^0(2n,m)$ given by~\eqref{eq:pcm+} is nonzero only if $j=0$ and $i=n$, and $\pc_+^0(2n+1,m)=\pc_+^0(2n-1,m)=0$ for the same reason.
This together with~\eqref{eq:pc+-} gives (iii).
\end{proof}

For $m=1$ we have $\pc_+(n,1)=\sum_{2i+j=n} 2^i \binom{i+j-1}{j}$ and $\pc_+^k(n,1)=0$ for $k\ge1$ from
\begin{align*}
\sum_{n,k\ge0} \pc_+^k(n,1) q^n t^k 
&= \frac{1-q}{1-q-2q^2} 
= \sum_{i\ge0} \frac{(2q^2)^i}{(1-q)^i}.
\end{align*}
We can also use a combinatorial argument for this. 
In fact, since any two integers are congruent modulo $m=1$, we have must $\pc_+^k(n,1)=0$ for $k\ge1$, and for $k=0$, the number $\pc_+(n,1)$ counts compositions of $n$ without an odd middle part.
The combinatorial proof of Theorem~\ref{thm:pc} can be modified with $|\beta|=j$ and $|\gamma|=2i$ to give another proof of the above formula of $\pc_+(n,1)$.
Analogously, we have $\pc(n,1) = 2^{n-1}$ counts all compositions of $n$ and $\pc^k(n,1)=0$ for $k\ge1$.
Note that $\pc_+(n,1)$ coincide with~\cite[A078008]{OEIS}, which counts compositions of $n$ with parts greater than one, each part colored in two possible ways.
It would be interesting to have a bijection between the two families of compositions both counted by $\pc_+(n,1)$.

For $m=2$ we observe the following.

\begin{corollary}
For $n\ge0$ we have $\pc_+^k(n,2) = \sum_{2i+2j=n-k} 2^i \binom{i}{k} \binom{i+j-1}{j}$ which is zero when $n-k$ is odd.
In particular, $\pc_+^1(2n,2)=0$ and $\pc_+^1(2n+1,2) = \sum_{i\ge0} (i+1)2^{i+1}\binom{n-1}{i}$ for $n\ge1$~\cite[A081038]{OEIS}.
\end{corollary}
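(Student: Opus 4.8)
The plan is to obtain everything by specializing Theorem~\ref{thm:pcm} to $m=2$. I would start from the multinomial formula~\eqref{eq:pcm2}. When $m=2$, the index set $i_0,i_1,\ldots,i_{m-2}$ consists of the single index $i_0$, so the constraint $i_0+i_1+\cdots+i_{m-2}=k$ forces $i_0=k$ and the multinomial coefficient $\binom{k}{i_0,\ldots,i_{m-2}}=\binom{k}{k}=1$. Moreover the weighted sum $i_1+2i_2+\cdots+(m-2)i_{m-2}$ is empty, so the second summation constraint $2i+mj+i_1+2i_2+\cdots=n-k$ collapses to $2i+2j=n-k$. This immediately yields the claimed identity
\[ \pc_+^k(n,2)=\sum_{2i+2j=n-k}2^i\binom{i}{k}\binom{i+j-1}{j}. \]
(One could equally well specialize the alternating-sum formula~\eqref{eq:pcm1}, but there the auxiliary variables $r,s$ do not collapse as cleanly, so~\eqref{eq:pcm2} is the more convenient starting point.)

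The vanishing claim is then purely a parity observation: the summation condition $2i+2j=n-k$ has an even left-hand side, so the sum is empty (hence zero) whenever $n-k$ is odd. In particular, taking $k=1$ and replacing $n$ by $2n$ gives $n-k=2n-1$ odd, so $\pc_+^1(2n,2)=0$.

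For the final identity I would set $k=1$ and replace $n$ by $2n+1$, so that $n-k=2n$ and the constraint becomes $i+j=n$. Substituting $j=n-i$ and using $\binom{i}{1}=i$ together with $\binom{i+j-1}{j}=\binom{n-1}{n-i}=\binom{n-1}{i-1}$ reduces the sum to $\sum_i i\,2^i\binom{n-1}{i-1}$. Reindexing by $i\mapsto i+1$ then turns this into $\sum_{i\ge0}(i+1)2^{i+1}\binom{n-1}{i}$, which is the asserted expression matching~\cite[A081038]{OEIS}.

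Since every step is either a direct specialization of an already-proved generating-function formula or an elementary manipulation of binomial coefficients, I do not expect a genuine obstacle. The only points requiring care are the bookkeeping of how the multinomial index set degenerates when $m=2$ (so that the general formula really does reduce to a single sum) and the shift of summation index in the last display. As an alternative, the main formula admits a direct combinatorial derivation by modifying the bijection $\alpha\leftrightarrow(b',b'')$ from the combinatorial proof of Theorem~\ref{thm:pc}, with $|\beta|$ and $|\gamma|$ recorded modulo $2$; I would note this but not develop it in full.
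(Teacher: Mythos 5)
Your proposal is correct and follows essentially the same route as the paper: the paper also obtains the formula by setting $m=2$ in~\eqref{eq:pcm2} (where the multinomial data degenerates to the single index $i_0=k$), notes the sum is empty when $n-k$ is odd, and leaves the $k=1$ specialization as a routine computation, which you carry out correctly via the substitution $j=n-i$ and the reindexing $i\mapsto i+1$.
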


\begin{proof}
Taking $m=2$ in the formula~\eqref{eq:pcm2} gives $\pc_+^k(n,2) = \sum_{2i+2j=n-k} 2^i \binom{i}{k} \binom{i+j-1}{j}$.
This sum is empty if $2i+2j=n-k$ is odd.
The rest of the result follows.
\end{proof}

\section{Reduced partially palindromic compositions modulo $m$}\label{sec:rpcm}

Let $\rpc^k(n,m)$ denote the number of equivalence classes of compositions counted by $\pc^k(n,m)$ under swaps of the first and last parts, the second and second last parts, and so on.
We define $\rpc^k_+(n,m)$ and $\rpc^k_-(n,m)$ similarly.
It follows that $\rpc^k(n,m) = \rpc^k_+(n,m) + \rpc^k_-(n,m) = \rpc^k_+(n,m) + \rpc^k_+(n-1,m)$ where $\rpc^k_+(-1,m):=0$.

\begin{theorem}\label{thm:rpcm}
For $n,k\ge0$ we have
\begin{align}\label{eq:rpcm1}
\rpc_+^k(n,m) =& \sum_{2i+mj+2c+(m-1)r+s = n-k} (-1)^r \binom{i}{k}\binom{i+j-1}{j}\binom{i+c}{c}\binom{k}{r}\binom{k+s-1}{s}  \\
=& \sum_{\substack{ i_0+i_1+\cdots+i_{m-2} = k \\ 2i+mj+2c+i_1+2i_2+\cdots+(m-2)i_{m-2} = n-k} } 
\binom{i}{k}\binom{i+j-1}{j}\binom{i+c}{c}\binom{k}{i_0,i_1,\ldots,i_{m-2}}. \label{eq:rpcm2}
\end{align}
\end{theorem}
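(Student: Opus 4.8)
The plan is to derive both formulas from a generating function for $\rpc_+^k(n,m)$, following closely the proofs of Theorem~\ref{thm:pcm} and Theorem~\ref{thm:rac}. As there, I would pair the $h$th part with the $(\ell+1-h)$th part, leave the even middle part unpaired with generating function $1/(1-q^2)$, and assemble a composition by concatenating pairs, so that the full generating function factors as $\frac{1}{1-P_m(q,t)}\cdot\frac{1}{1-q^2}$ for an appropriate per-pair generating function $P_m(q,t)$. Extracting $[q^nt^k]$ will then give~\eqref{eq:rpcm1}, and a multinomial rewriting will give~\eqref{eq:rpcm2}.

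The crux, and the step I expect to be the main obstacle, is pinning down $P_m(q,t)$ correctly. For the unreduced count each pair contributes $G_m(q,t)$, the sum of the equal-modulo-$m$ (unmarked) ordered pairs and the unequal-modulo-$m$ (marked by $t$) ordered pairs, exactly as in Theorem~\ref{thm:pcm}. Passing to equivalence classes under the swap of the two parts in a pair is subtler here than in Theorem~\ref{thm:rac}: a swap genuinely changes a pair precisely when its two parts are distinct \emph{as integers}, which for finite $m$ includes pairs that are equal modulo $m$ but have different actual values, not merely the $t$-marked pairs. Writing the diagonal of equal-integer pairs as $q^2/(1-q^2)$ (these are fixed by the swap) and halving every off-diagonal pair, I would obtain
\[ P_m(q,t) = \frac{q^2}{1-q^2} + \frac12\Bigl(G_m(q,t) - \frac{q^2}{1-q^2}\Bigr) = \frac12 G_m(q,t) + \frac{q^2}{2(1-q^2)}. \]
This correction term is what makes $\rpc$ genuinely different from the clean substitution $G(q,1/2t)$ available in Theorem~\ref{thm:rac} (the non-modular case $m=\infty$, where the equal-modulo-$m$ off-diagonal is empty and the correction is automatic), and it is exactly where the extra factor $\binom{i+c}{c}$, absent from Theorems~\ref{thm:pcm} and~\ref{thm:rac}, will originate; overlooking the equal-modulo-$m$ but unequal-as-integers pairs is the easy mistake to avoid.

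With $P_m$ in hand the rest is routine bookkeeping. Using the explicit form of $G_m$ recorded in the proof of Theorem~\ref{thm:pcm}, I would simplify
\[ \sum_{n,k\ge0}\rpc_+^k(n,m)\,q^n t^k = \frac{1}{1-P_m(q,t)}\cdot\frac{1}{1-q^2} = \frac{(1-q)(1-q^m)}{(1-q)(1-q^2)(1-q^m)-q^2\bigl(1-q+q(1-q^{m-1})t\bigr)}, \]
and then expand it as a geometric series in the bracketed quantity to get
\[ \frac{1}{1-q^2}\sum_{i\ge0}\frac{q^{2i}}{(1-q^2)^i(1-q^m)^i}\Bigl(1+\frac{q(1-q^{m-1})t}{1-q}\Bigr)^i. \]
Extracting the coefficient of $t^k$ produces $\binom{i}{k}$ together with $\frac{q^{2i+k}(1-q^{m-1})^k}{(1-q^2)^{i+1}(1-q^m)^i(1-q)^k}$, and expanding each of $1/(1-q^2)^{i+1}$, $1/(1-q^m)^i$, $(1-q^{m-1})^k$, and $1/(1-q)^k$ by the binomial series — contributing $\binom{i+c}{c}q^{2c}$, $\binom{i+j-1}{j}q^{mj}$, $(-1)^r\binom{k}{r}q^{(m-1)r}$, and $\binom{k+s-1}{s}q^s$ respectively — and reading off $[q^n]$ yields~\eqref{eq:rpcm1}. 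Finally, rewriting $(1-q^{m-1})^k/(1-q)^k=(1+q+\cdots+q^{m-2})^k$ and applying the multinomial theorem, exactly as in the passage from~\eqref{eq:pcm1} to~\eqref{eq:pcm2}, gives~\eqref{eq:rpcm2}.
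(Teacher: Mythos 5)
Your proposal is correct and takes essentially the same route as the paper: the paper's per-pair generating function $G'_m(q,t)$ is exactly your $P_m(q,t)=\tfrac12 G_m(q,t)+\tfrac{q^2}{2(1-q^2)}$ (the paper writes the fixed diagonal residue-class by residue-class rather than as $q^2/(1-q^2)$, but the two expressions agree), and the resulting closed form $\frac{(1-q)(1-q^m)}{(1-q)(1-q^2)(1-q^m)-q^2(1-q)-q^3(1-q^{m-1})t}$ and the coefficient extraction are identical to the paper's. Your observation that pairs equal modulo $m$ but unequal as integers must also be halved is precisely the content of the paper's $G'_m$.
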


\begin{proof}
Similarly to the proof of Theorem~\ref{thm:pcm}, the generating function for each pair is
\begin{align*}
G'_m (q,t) 
=& (q^2+q^4+\cdots+q^{2m}) \left( 1+q^{2m}+\cdots +\frac12((1+q^m+\cdots)^2-1-q^{2m}-\cdots) \right) \\
&+ \frac t2 ((q+q^2+\cdots+q^m)^2-q^2-q^4-\cdots-q^{2m})(1+q^m+\cdots)^2
\end{align*}
and the generating function of $\rpc_+^k(n,m)$ is
\begin{align*}
\sum_{n,k\ge0} \rpc_+^k(n,m) q^n t^k 
&= \frac{1}{1-G'_m(q,t)}\frac{1}{1-q^2} \\
&= \frac{(1-q)(1-q^m)}{(1-q)(1-q^2)(1-q^m)-q^2(1-q) - q^3(1-q^{m-1})t} \\
&= \frac1{1-q^2} \sum_{i\ge k\ge0} \binom{i}{k}\frac{q^{2i+k}(1-q)^{i-k} (1-q^{m-1})^k t^k}{(1-q)^i(1-q^2)^i(1-q^m)^i}. \\
\end{align*}
Extracting the coefficient of $q^n t^k$ gives the desired formulas for $\rpc_+^k(n,m)$.
\end{proof}

For $k=0$ we have the number $\rpc_+(n,m)$ counts compositions $\alpha=(\alpha_1, \ldots, \alpha_\ell)$ of $n$ such that $\alpha_{(\ell+1)/2}$ is even whenever $\ell$ is odd and that $\alpha_h-\alpha_{\ell+1-h}$ is a nonnegative multiple of $m$ for $h=1, \ldots,\lfloor \ell/2 \rfloor$.
One can compare the following formula of $\rpc_+(n,m)$ with the formula~\eqref{eq:pcm+} for $\pc_+(n,m)$.

\begin{corollary}
For $n\ge0$ we have
\[ \rpc_+(n,m) = \sum_{2i+mj+2r=n} \binom{i+j-1}{j} \binom{i+r}{r}. \]
\end{corollary}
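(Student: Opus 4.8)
The plan is to derive this corollary directly as the $k=0$ specialization of Theorem~\ref{thm:rpcm}, since $\rpc_+(n,m)=\rpc_+^0(n,m)$ by definition. I would start from the closed formula~\eqref{eq:rpcm1} and set $k=0$, then track how each factor of the summand collapses. The crucial observation is that the two factors carrying the index $k$, namely $\binom{k}{r}$ and $\binom{k+s-1}{s}$, each force their summation index to vanish when $k=0$: under the binomial convention fixed in the introduction, $\binom{0}{r}$ equals $1$ for $r=0$ and $0$ otherwise, while $\binom{s-1}{s}$ equals $1$ for $s=0$ (the $b=0$ case) and $0$ for $s\ge1$ (since then $s-1<s$ with $s\ge1$ falls in the ``otherwise'' case). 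Hence only the terms with $r=s=0$ survive.

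With $r=0$ the sign $(-1)^r$ becomes $1$, and with $r=s=0$ the defining constraint $2i+mj+2c+(m-1)r+s=n-k$ reduces to $2i+mj+2c=n$. Since $\binom{i}{k}=\binom{i}{0}=1$ as well, the summand collapses to $\binom{i+j-1}{j}\binom{i+c}{c}$, and after renaming the index $c$ to $r$ this is precisely the claimed identity. (One could equally well specialize~\eqref{eq:rpcm2}: there $i_0+\cdots+i_{m-2}=k=0$ forces every $i_h=0$, the multinomial coefficient becomes $1$, and the same simplification results.) I expect no real obstacle here; the only point requiring care is the clean collapse of the alternating/signed structure, which hinges on confirming that both $k$-indexed binomials are supported solely at their zero index under the paper's convention.

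As a consistency check—or an alternative route if one prefers to avoid the convention bookkeeping—I would instead return to the generating function displayed in the proof of Theorem~\ref{thm:rpcm} and extract its $t^0$ coefficient. Setting $k=0$ in that sum kills the factor $(1-q^{m-1})^k$ and cancels $(1-q)^{i-k}$ against $(1-q)^i$, leaving
\[ \sum_{n\ge0}\rpc_+(n,m)\,q^n=\sum_{i\ge0}\frac{q^{2i}}{(1-q^2)^{i+1}(1-q^m)^i}. \]
Expanding $\frac{1}{(1-q^2)^{i+1}}=\sum_{c\ge0}\binom{i+c}{c}q^{2c}$ and $\frac{1}{(1-q^m)^i}=\sum_{j\ge0}\binom{i+j-1}{j}q^{mj}$ and reading off the coefficient of $q^n$ reproduces the same formula. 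This second derivation has the advantage that the binomial $\binom{i+r}{r}$ arises naturally from the $(i+1)$st power in the denominator, explaining the shift relative to the $\binom{i+j-1}{j}$ coming from the $i$th power.
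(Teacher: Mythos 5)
Your proposal is correct and matches the paper's own proof, which likewise sets $t=0$ (equivalently $k=0$) in the generating function from Theorem~\ref{thm:rpcm} to get $\sum_{i\ge0} q^{2i}/\bigl((1-q^2)^{i+1}(1-q^m)^i\bigr)$ and then reads off coefficients; your first route via specializing~\eqref{eq:rpcm1} is just the same computation packaged after coefficient extraction, and your handling of the convention for $\binom{0}{r}$ and $\binom{s-1}{s}$ is right. The paper additionally remarks that the formula has a combinatorial proof by adapting the bijection of Theorem~\ref{thm:pc} with $|\beta|=mj$, $|\gamma|=2(i+r)$, and $\ell(\gamma)=2i$, but that is offered as a supplement, not the main argument.
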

\begin{proof}
Setting $t=0$ in the generating function of $\rpc_+^k(n,m)$ gives 
\begin{align*}
\sum_{n \ge0} \rpc_+(n,m) q^n
&= \frac{(1-q^m)}{(1-q^2)(1-q^m)-q^2} 
= \frac1{1-q^2} \sum_{i\ge k\ge0} \frac{q^{2i}}{(1-q^2)^i(1-q^m)^i}. 
\end{align*}
This implies the desired formula for $\rpc_+(n,m)$, which can also be obtained by modifying the combinatorial proof of Theorem~\ref{thm:pc} with $|\beta|=mj$, $|\gamma|=2(i+r)$, and $\ell(\gamma)=2i$.
\end{proof}

In particular, $\rpc_+(n,1)$ coincides with~\cite[A052547]{OEIS} and $\rpc(n,1) = \sum_{2i+j+2r=n} \binom{i+j}{j} \binom{i+r-1}{r}$ agrees with~\cite[A028495]{OEIS}, which also counts compositions of $n$ with increments appearing only at every second position (such compositions are in bijection with the compositions counted by $\rpc(n,1)$ by reordering parts appropriately). 
Next, the generating function of $\rpc_+(n,2)$ can be written as $(F(q)-F(-q))/2q$, where $F(q) = q/(1-q-q^2)$ is the generating function of the Fibonacci numbers.
Thus $\rpc_+(2n,2)=F_{2n+1}$ and $\rpc_+(2n+1,2)=0$.
Consequently, $\rpc(n,2)$ agrees with~\cite[A094967]{OEIS}: $\rpc(2n,2)=\rpc(2n+1,2)=F_{2n+1}$.
Moreover, $\rpc_+(2n,4)$ is the same as~\cite[A052534]{OEIS} and $\rpc_+(2n+1,4)=0$.

On the other hand, for $m=1$ and $k\ge1$ we have
$\rpc_+^k(n,1) = 0$ which follows from either its definition or its generating function.
For $m=2$ we have $\rpc_+^k(n,2) = \sum_{2i+2j = n-k} \binom{i}{k}\binom{2i+j}{j}$ from its generating function
\[ \sum_{n,k\ge0} \rpc_+^k(n,2) q^n t^k 
= \frac{(1-q^2)}{(1-q^2)^2-q^2-q^3t} 
= \frac1{1-q^2} \sum_{i\ge0}\frac{q^{2i}(1+qt)^i}{(1-q^2)^{2i}}.\]
Thus $\rpc_+^1(2n,2)=0$, $\rpc_+^1(2n+1,2)= \sum_{0\le i\le n} i \binom{n+i}{2i}$~\cite[A001870]{OEIS}, and $\rpc^1(2n+1,2)=\rpc^1(2n+2,2)=\rpc_+^1(2n+1,2)$.

\section{Partially anti-palindromic compositions modulo $m$}\label{sec:acm}

In this section we provide closed formulas for $\ac_+^k(n,m)$ and $\ac^k(n,m)$, where $m$ is a positive integer.

\begin{theorem}\label{thm:acm+}
For $n,k\ge0$ we have
\begin{align*}  
\ac_+^k(n,m) &= \sum_{2i+j+r(m-1)+s+mc+md =n-2k} (-1)^r 2^j\binom{i+k}{k}\binom{i}{j}\binom{j}{r}\binom{j+s-1}{s}\binom{k}{c}\binom{k+j+d-1}{d} \\
&= \sum_{\substack{ i_0+i_1+\cdots+i_{m-2} = j \\ 2i+j+i_1+2i_2+\cdots+(m-2)i_{m-2}+mc+md = n-2k} } 
2^j\binom{i+k}{k}\binom{i}{j} \binom{j}{i_0, \ldots, i_{m-2}} \binom{k}{c}\binom{k+j+d-1}{d}. 
\end{align*}
\end{theorem}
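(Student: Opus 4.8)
The plan is to mirror the generating-function derivation used for Theorem~\ref{thm:pcm}, but with the marking variable $t$ reversed so that it records congruent pairs rather than incongruent ones, exactly as equation~\eqref{eq:AC} reverses the marking of $G(q,t)$ in the non-modular case. Concretely, I would reuse the per-pair generating function $G_m(q,t)$ from the proof of Theorem~\ref{thm:pcm}, in which congruent pairs are unmarked and incongruent pairs carry $t$, and form $tG_m(q,1/t)$ so that congruent pairs now carry $t$. Since a present middle part must be even (the ``$+$'' condition depends only on parity and hence is independent of $m$), it still contributes the factor $1/(1-q^2)$. This yields
\[ \sum_{n,k\ge0}\ac_+^k(n,m)\,q^nt^k = \frac{1}{(1-q^2)\bigl(1-tG_m(q,1/t)\bigr)}. \]

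Next I would extract the coefficient of $t^k$. Writing $tG_m(q,1/t)=P+tQ$, where $P$ is the (incongruent) $t$-free part and $Q$ the (congruent) coefficient of $t$, the identity $\frac{1}{1-P-tQ}=\frac{1}{1-P}\sum_{k\ge0}\bigl(\frac{tQ}{1-P}\bigr)^k$ gives
\[ \sum_{n\ge0}\ac_+^k(n,m)\,q^n = \frac{Q^k}{(1-q^2)(1-P)^{k+1}}. \]
From the explicit shapes $Q=\frac{q^2(1+q^m)}{(1-q^2)(1-q^m)}$ and $P=\frac{B-A}{(1-q^m)^2}$ (with $A=q^2+\cdots+q^{2m}$ and $B=(q+\cdots+q^m)^2$), the crucial step is the simplification $(1-q^2)P=q^2w$, where $w:=\frac{2q(1-q^{m-1})}{(1-q)(1-q^m)}$. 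This reduces the displayed rational function to
\[ \sum_{n\ge0}\ac_+^k(n,m)\,q^n = \frac{q^{2k}(1+q^m)^k}{(1-q^m)^k}\cdot\frac{1}{\bigl(1-q^2(1+w)\bigr)^{k+1}}. \]

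Finally I would expand each factor as a power series and collect the exponent of $q$. Using $\frac{1}{(1-q^2(1+w))^{k+1}}=\sum_i\binom{i+k}{k}q^{2i}(1+w)^i$, then $(1+w)^i=\sum_j\binom{i}{j}w^j$ with $w^j=2^jq^j\frac{(1-q^{m-1})^j}{(1-q)^j(1-q^m)^j}$, and expanding $(1-q^{m-1})^j$, $(1-q)^{-j}$, $(1+q^m)^k$, and $(1-q^m)^{-(k+j)}$ by the binomial theorem produces the parameters $r,s,c,d$ and the first formula. For the second formula I would instead combine $(1-q^{m-1})^j/(1-q)^j=(1+q+\cdots+q^{m-2})^j$ and apply the multinomial theorem, exactly as in the passage from~\eqref{eq:pcm1} to~\eqref{eq:pcm2}.

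I expect the only real obstacle to be the algebraic identity $(1-q^2)P=q^2w$ in the second step; unpacking it amounts to verifying $\frac{1-q^m}{1-q}-\frac{1+q^m}{1+q}=\frac{2q(1-q^{m-1})}{1-q^2}$, after which the factor $(1-q^m)$ cancels cleanly and the remaining coefficient extractions are routine. A parallel combinatorial proof should also be available by adapting the bijection $\alpha\leftrightarrow(b',b'')$ together with the residue bookkeeping used for Theorem~\ref{thm:pcm}, but the generating-function route above is the most direct.
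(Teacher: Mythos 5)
Your proposal is correct and follows essentially the same route as the paper: both form $\frac{1}{(1-q^2)(1-tG_m(q,1/t))}$, reduce the denominator via the identity $(1-q^2)P=q^2w$ with $w=\frac{2q(1-q^{m-1})}{(1-q)(1-q^m)}$ (the paper does this by clearing $(1-q)(1-q^m)$ and expanding $\bigl(1+w+\frac{(1+q^m)t}{1-q^m}\bigr)^i$ geometrically, which is the same computation with the $t^k$-extraction deferred), and then obtain the two stated formulas by binomial and multinomial expansion exactly as you describe. No gaps.
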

\begin{proof}
Similarly as the proof of Theorem~\ref{thm:pcm}, we have
\begin{align*}  
\sum_{n,\, k\ge0} \ac_+^k(n,m) q^n t^k 
&= \frac{1}{1-tG_m(q,1/t)} \frac{1}{1-q^2} \\
&= \frac{(1-q)(1-q^m)}{(1-q)(1-q^m)-q^2(1-q)(1-q^m)-2q^3(1-q^{m-1})-q^2(1-q)(1+q^m)t} \\
&= \sum_{i\ge0} q^{2i} \left( 1 + \frac{2q(1-q^{m-1})}{(1-q)(1-q^m)} + \frac{(1+q^m)t}{1-q^m} \right)^i \\
&= \sum_{i,k\ge0} q^{2(i+k)} \binom{i+k}{k} \left( 1 + \frac{2q(1-q^{m-1})}{(1-q)(1-q^m)} \right)^i \frac{(1+q^m)^k t^k}{(1-q^m)^k} \\
&= \sum_{i,j,k\ge0} q^{2(i+k)} \binom{i+k}{k} \binom{i}{j} \frac{(2q)^j(1-q^{m-1})^j}{(1-q)^j(1-q^m)^j} \frac{(1+q^m)^k t^k}{(1-q^m)^k}
\end{align*}
Extracting the coefficient of $q^nt^k$ gives the first formula for $\ac_+^k(n,m)$.

We can also write $(1-q^{m-1})^j/(1-q)^j = (1+q+\cdots+q^{m-2})^j$ in the generating function of $\ac_+(n,m,k)$ and apply the multinomial theorem to obtain the second formula.
\end{proof}

Combining Theorem~\ref{thm:acm+} and the formula~\eqref{eq:ac+-} we have two formulas for $\ac^k(n,m)$.
We also directly provide another formula for $\ac^k(n,m)$ below.

\begin{theorem}\label{thm:acm}
For $n, k\ge0$ we have
\begin{align*}  
\ac^k(n,m) 
=& \sum_{3i+j+r(m-1)+2s+cm+dm=n-2k} (-1)^r 2^i \binom{i+k}{k} \binom{i+j}{j} \binom{i}{r} \binom{i+k+s-1}{s} \binom{k}{c} \binom{i+k+d-1}{d}.
\end{align*}
\end{theorem}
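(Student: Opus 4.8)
The plan is to derive Theorem~\ref{thm:acm} directly from a generating function, in the same spirit as the proofs of Theorems~\ref{thm:pcm} and~\ref{thm:acm+}. First I would record the generating function of $\ac^k(n,m)$. By~\eqref{eq:ac+-} we have $\ac^k(n,m)=\ac_+^k(n,m)+\ac_+^k(n-1,m)$, so the generating function of $\ac^k(n,m)$ is $(1+q)$ times that of $\ac_+^k(n,m)$; combining the closed form $\sum_{n,k}\ac_+^k(n,m)q^nt^k=\frac{1}{1-tG_m(q,1/t)}\frac{1}{1-q^2}$ from the proof of Theorem~\ref{thm:acm+} with $(1+q)/(1-q^2)=1/(1-q)$ gives
\[ \sum_{n,k\ge0}\ac^k(n,m)\,q^nt^k=\frac{1}{1-q}\cdot\frac{1}{1-tG_m(q,1/t)}. \]

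Next I would put this into a product form ready for coefficient extraction. Writing $N=(1-q)(1-q^2)(1-q^m)-2q^3(1-q^{m-1})-q^2(1-q)(1+q^m)t$ for the numerator of $1-tG_m(q,1/t)$ over the common denominator $(1-q)(1-q^2)(1-q^m)$ (already obtained in the proof of Theorem~\ref{thm:acm+}), the displayed generating function simplifies to $(1-q^2)(1-q^m)/N$. The key algebraic step is to factor
\[ N=(1-q)(1-q^2)(1-q^m)\,(1-X-Yt),\quad X=\frac{2q^3(1-q^{m-1})}{(1-q)(1-q^2)(1-q^m)},\quad Y=\frac{q^2(1+q^m)}{(1-q^2)(1-q^m)}, \]
after which the generating function collapses to $\frac{1}{(1-q)(1-X-Yt)}$.

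Then I would expand. The negative binomial theorem gives $\frac{1}{1-X-Yt}=\sum_{i,k\ge0}\binom{i+k}{k}X^iY^kt^k$, and substituting the explicit $X$ and $Y$ turns the generating function into
\[ \sum_{i,k\ge0}\binom{i+k}{k}\frac{2^i\,q^{3i+2k}(1-q^{m-1})^i(1+q^m)^k}{(1-q)^{i+1}(1-q^2)^{i+k}(1-q^m)^{i+k}}\,t^k. \]
I would then expand each of the six $q$-factors, introducing the summation indices $r$ from $(1-q^{m-1})^i$ (carrying $(-1)^r\binom{i}{r}$), $c$ from $(1+q^m)^k$ (giving $\binom{k}{c}$), $j$ from $(1-q)^{-(i+1)}$ (giving $\binom{i+j}{j}$), $s$ from $(1-q^2)^{-(i+k)}$ (giving $\binom{i+k+s-1}{s}$), and $d$ from $(1-q^m)^{-(i+k)}$ (giving $\binom{i+k+d-1}{d}$). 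Extracting the coefficient of $q^nt^k$ collects exactly the summand of the claimed formula under the constraint $3i+j+(m-1)r+2s+cm+dm=n-2k$.

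All of this is routine series manipulation, so I expect the only delicate point to be the factorization of $N$ in the second paragraph: one must check that the three denominators $(1-q)$, $(1-q^2)$, $(1-q^m)$ distribute so that $X$ and $Y$ emerge with exactly these denominators, and that the extra factor $(1+q)/(1-q^2)$ coming from~\eqref{eq:ac+-} cancels to leave the single prefactor $1/(1-q)$. Since the explicit form of $1-tG_m(q,1/t)$ is already available from Theorem~\ref{thm:acm+}, this verification is elementary algebra and requires no new combinatorial input.
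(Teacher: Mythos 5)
Your proposal is correct and follows essentially the same route as the paper: both write the generating function as $(1-q^2)(1-q^m)$ over the common-denominator numerator $N$, factor it as $\frac{1}{1-q}\cdot\frac{1}{1-X-Yt}$ with the same $X$ and $Y$, apply the negative binomial theorem, and extract coefficients from the six $q$-factors exactly as you describe. The only cosmetic difference is that you obtain the prefactor $1/(1-q)$ via \eqref{eq:ac+-} and the $\ac_+^k(n,m)$ generating function, while the paper writes $\frac{1}{1-tG_m(q,1/t)}\cdot\frac{1}{1-q}$ directly.
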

\begin{proof}
Similarly to the proof of Theorem~\ref{thm:pcm}, we have
\begin{align*}  
\sum_{n,\, k\ge0} \ac^k(n,m) q^n t^k 
&= \frac{1}{1-tG_m(q,1/t)} \frac{1}{1-q} \\
&= \frac{(1-q^2)(1-q^m)}{(1-q)(1-q^2)(1-q^m)-2q^3(1-q^{m-1}) - q^2(1-q)(1+q^m)t} \\
&= \frac{1}{1-q} \sum_{i\ge0} \left( \frac{2q^3(1-q^{m-1})}{(1-q)(1-q^2)(1-q^m)} + \frac{q^2(1-q)(1+q^m)t}{(1-q)(1-q^2)(1-q^m)} \right)^i \\
&= \frac{1}{1-q} \sum_{i,k\ge0} \binom{i+k}{k} \frac{(2q^3)^i (1-q^{m-1})^i}{(1-q)^i(1-q^2)^i(1-q^m)^i} \cdot \frac{q^{2k}(1+q^m)^k t^k}{(1-q^2)^k(1-q^m)^k} \\
&= \sum_{i,k\ge0} \binom{i+k}{k} \frac{2^i q^{3i+2k}(1-q^{m-1})^i(1+q^m)^k t^k} {(1-q)^{i+1}(1-q^2)^{i+k}(1-q^m)^{i+k}}.
\end{align*}
Extracting the coefficient of $q^nt^k$ from this gives the desired formula for $\ac^k(n,m)$.
\end{proof}
 
Taking $k=0$ in Theorem~\ref{thm:acm+} and Theorem~\ref{thm:acm} immediately give closed formulas for $\ac_+(n,m)$ and $\ac(n,m)$.
We do not find any connection of $\ac_+(n,m)$ and $\ac(n,m)$ with existing sequences in OEIS~\cite{OEIS}.

Another interesting specialization is at $m=1$. 
Since any two integers are congruent modulo $m=1$, the number $\ac_+^k(n,1)$ counts compositions of $n$ with $2k$ parts or with $2k+1$ parts, the middle one of which is even.
The generating function of $\ac_+^k(n,m)$ specializes to 
\[ \sum_{n,k\ge0} \ac_+^k(n,1) q^n t^k = \frac{1}{1-q^2-q^2(1+q)t/(1-q)} 
= \sum_{i,k\ge0} q^{2(i+k)} \binom{i+k}{k} \frac{(1+q)^kt^k}{(1-q)^k} \]
which implies
\[ \ac_+^k(n,1) = \sum_{2i+c+d=n-2k} \binom{i+k}{k} \binom{k}{c} \binom{k+d-1}{d}. \]
A signed version of $\ac_+^k(n,1)$ gives a Riordan array which is the coefficient table of the square of Chebyshev $S$-polynomials and also sends the Catalan numbers to ones~\cite[A158454]{OEIS}. 
We also find some special cases: $\ac_+(n,1)=(1+(-1)^n)/2$, $\ac_+^1(n,1) = \lfloor n^2/4 \rfloor$~\cite[A002620]{OEIS}, $\ac^2_+(n,1)$~\cite[A001752]{OEIS}, $\ac^3_+(n,1)$~\cite[A001769]{OEIS}, $\ac^4_+(n,1)$~\cite[A001780]{OEIS}, and $\ac^5_+(n,1)$~\cite[A001786]{OEIS}.

Similarly, the number $\ac^k(n,1)$ counts all compositions of $n$ with $2k$ or $2k+1$ parts, and
\[ \sum_{n,\, k\ge0} \ac^k(n,1) q^n t^k = \frac{1-q}{(1-q)^2-q^2t} = \sum_{k\ge0} \frac{q^{2k}t^k}{(1-q)^{2k+1}} \]
which implies $\ac^k(n,1) = \binom{2k+1+n-2k-1}{n-2k} = \binom{n}{2k}$.
This formula can also be proved combinatorially as compositions of $n$ with $2k$ or $2k+1$ parts are in bijection with binary sequences of length $n$ with exactly $2k$ ones (if the last digit is zero, change it to one to get a composition with $2k+1$ parts).

For $m=2,3$ and $k=0,1,2$ we list some initial terms of $\ac^k(n,m)$ below.
\begin{itemize}
\item
$\ac^0(n,2): 1, 1, 1, 3, 3, 7, 11, 17, 33, 49, 89, 147, 243, 423, 691, 1185, \ldots$
\item
$\ac^1(n,2): 0, 0, 1, 1, 4, 8, 13, 33, 52, 108, 201, 353, 688, 1196, 2213, 3985, \ldots$
\item
$\ac^2(n,2): 0, 0, 0, 0, 1, 1, 7, 13, 32, 80, 148, 352, 677, 1381, 2799, 5313, \ldots$
\item
$\ac^0(n,3): 1, 1, 1, 3, 5, 7, 15, 27, 43, 81, 147, 249, 449, 809, 1409, 2507, \ldots$
\item
$\ac^1(n,3): 0, 0, 1, 1, 2, 8, 13, 23, 58, 108, 195, 411, 786, 1446, 2831, 5387, \ldots$
\item
$\ac^2(n,3): 0, 0, 0, 0, 1, 1, 3, 13, 22, 48, 132, 258, 525, 1197, 2409, 4797, \ldots$
\end{itemize}
OEIS~\cite{OEIS} does not contain any of the above sequences. 

\section{Reduced partially anti-palindromic compositions modulo $m$}\label{sec:racm}
Let $\rac^k(n,m)$ denote the number of equivalence classes of compositions counted by $\ac^k(n,m)$ under swaps of the first and last parts, the second and second last parts, and so on.
Define $\rac^k_+(n,m)$ and $\rac^k_-(n,m)$ similarly.
It follows that $\rac^k(n,m) = \rac^k_+(n,m) + \rac^k_-(n,m) = \rac^k_+(n,m) + \rac^k_+(n-1,m)$ where $\rac^k_+(-1,m):=0$.
We give two closed formulas for $\rac^k_+(n,m)$ and a third formula directly for $\rac^k(n,m)$.

\begin{theorem}\label{thm:racm}
For $n,k\ge0$ we have
\begin{align*}  
\rac_+^k(n,m) &= \sum_{2i+j+r(m-1)+s+md =n-2k} (-1)^r \binom{i+k}{k}\binom{i}{j}\binom{j}{r}\binom{j+s-1}{s} \binom{k+j+d-1}{d} \\
&= \sum_{\substack{ i_0+i_1+\cdots+i_{m-2} = j \\ 2i+j+i_1+2i_2+\cdots+(m-2)i_{m-2}+md = n-2k} } 
\binom{i+k}{k}\binom{i}{j} \binom{j}{i_0, \ldots, i_{m-2}} \binom{k+j+d-1}{d},
\end{align*}
\begin{align*}  
\rac^k(n,m) 
=& \sum_{3i+j+r(m-1)+2s+dm=n-2k} (-1)^r \binom{i+k}{k} \binom{i+j}{j} \binom{i}{r} \binom{i+k+s-1}{s} \binom{i+k+d-1}{d}.
\end{align*}
\end{theorem}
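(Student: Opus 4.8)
The plan is to closely follow the generating-function arguments in the proofs of Theorem~\ref{thm:acm+} and Theorem~\ref{thm:acm}, with the per-pair series $G_m(q,t)$ replaced throughout by its \emph{reduced} counterpart $G'_m(q,t)$ from the proof of Theorem~\ref{thm:rpcm}. Recall that $G'_m$ is obtained from $G_m$ by halving the contribution of every pair whose two parts are distinct integers---exactly the pairs that a swap carries to a different composition---while leaving pairs of equal integers untouched. Writing $G'_m(q,t)=A(q)+tB(q)$ with $A$ the reduced congruent part and $B$ the halved non-congruent part, the anti-palindromic substitution gives $tG'_m(q,1/t)=tA(q)+B(q)$, so the congruent pairs (the $k$ defects) are tracked by $t$ and the non-congruent pairs carry weight one, with the reduction already built into both $A$ and $B$. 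I therefore expect the generating function
\[
\sum_{n,k\ge0}\rac_+^k(n,m)\,q^nt^k=\frac{1}{1-tG'_m(q,1/t)}\cdot\frac{1}{1-q^2},
\]
and, via $\rac^k(n,m)=\rac_+^k(n,m)+\rac_+^k(n-1,m)$ (which multiplies by $1+q$),
\[
\sum_{n,k\ge0}\rac^k(n,m)\,q^nt^k=\frac{1}{1-tG'_m(q,1/t)}\cdot\frac{1}{1-q}.
\]

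First I would simplify each of these to a single rational function, exactly as in the proofs of Theorem~\ref{thm:acm+} and Theorem~\ref{thm:acm}. Passing from $G_m$ to $G'_m$ should alter precisely two factors in the resulting expansion. The factor $\frac{2q(1-q^{m-1})}{(1-q)(1-q^m)}$ attached to a non-congruent pair loses its $2$, becoming $\frac{q(1-q^{m-1})}{(1-q)(1-q^m)}$, since the two orderings of a distinct-integer pair are now identified. The factor $\frac{(1+q^m)^k}{(1-q^m)^k}$ attached to the $k$ congruent defects collapses to $\frac{1}{(1-q^m)^k}$: for a single congruent pair, two distinct congruent integers differing by $dm$ with $d\ge1$ occur in two orderings while equal integers occur once, contributing $1+2q^m+2q^{2m}+\cdots=\frac{1+q^m}{1-q^m}$ in the non-reduced count, whereas only the canonical ordering survives after reduction, giving $\frac{1}{1-q^m}$.

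With the rational forms in hand, I would expand using the geometric series, the binomial theorem, and---for the multinomial version---the identity $(1-q^{m-1})^j/(1-q)^j=(1+q+\cdots+q^{m-2})^j$, just as in Theorem~\ref{thm:acm+}. Extracting the coefficient of $q^nt^k$ then yields the two stated formulas for $\rac_+^k(n,m)$: the vanished power of $2$ removes the factor $2^j$ present in Theorem~\ref{thm:acm+}, while the collapse of $(1+q^m)^k$ eliminates both the summation index $c$ and the factor $\binom{k}{c}$ together with the $mc$ term in the constraint. The formula for $\rac^k(n,m)$ follows from the identical expansion of the second generating function, paralleling Theorem~\ref{thm:acm}; there the same two simplifications remove the factor $2^i$ and the index $c$ with its binomial $\binom{k}{c}$.

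The main obstacle will be verifying that $tG'_m(q,1/t)$ simplifies to the asserted rational function and that \emph{only} these two factors change relative to the $\ac$ case. The bookkeeping is delicate because $G'_m$ splits the congruent contribution into an equal-integer piece and a halved distinct-integer piece, and one must check that the substitution $t\mapsto 1/t$ keeps these separated correctly, so that the defect pairs supply exactly $\frac{1}{(1-q^m)^k}$ rather than $\frac{(1+q^m)^k}{(1-q^m)^k}$. Once the two rational forms are confirmed, the coefficient extraction is routine and has the same shape as in the non-reduced case.
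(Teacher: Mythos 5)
Your proposal matches the paper's proof essentially step for step: the paper likewise writes the generating functions as $\frac{1}{1-tG'_m(q,1/t)}\cdot\frac{1}{1-q^2}$ and $\frac{1}{1-tG'_m(q,1/t)}\cdot\frac{1}{1-q}$, simplifies each to the rational form you describe (with the non-congruent factor $\frac{q(1-q^{m-1})}{(1-q)(1-q^m)}$ losing its $2$ and the defect factor collapsing from $\frac{(1+q^m)^k}{(1-q^m)^k}$ to $\frac{1}{(1-q^m)^k}$), and then extracts coefficients exactly as in Theorems~\ref{thm:acm+} and~\ref{thm:acm}. Your accounting of which two factors change, and why, is correct, so no gap remains.
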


\begin{proof}
Similarly to the proof of Theorem~\ref{thm:rpcm}, the generating function for $\rac_+^k(n,m)$ is
\begin{align*}
\sum_{n,\, k\ge0} \rac_+^k(n,m) q^n t^k & = \frac{1}{1-tG'_m(q,1/t)} \frac{1}{1-q^2} \\
&= \frac{(1-q)(1-q^m)}{(1-q)(1-q^m)-q^2(1-2q^m+q^{m+1})-q^2(1-q)t } \\
&= \sum_{i\ge0} q^{2i}\left(1+\frac{q(1-q^{m-1})}{(1-q)(1-q^m)} + \frac{t}{1-q^m}\right)^i \\
&= \sum_{i,k\ge0} q^{2(i+k)}\binom{i+k}{k}\left(1+\frac{q(1-q^{m-1})}{(1-q)(1-q^m)}\right)^i\frac{t^k}{(1-q^m)^k} \\
&= \sum_{i,j,k\ge0} q^{2(i+k)}\binom{i+k}{k}\binom{i}{j}\frac{q^j(1-q^{m-1})^j}{(1-q)^j(1-q^m)^j}\frac{t^k}{(1-q^m)^k}.
\end{align*}
Extracting the coefficient of $q^nt^k$ gives the desired formulas for $\rac_+^k(n,m)$.
We also have 
\begin{align*}  
\sum_{n,\, k\ge0} \rac^k(n,m) q^n t^k 
&= \frac{1}{1-tG'_m(q,1/t)} \frac{1}{1-q} \\
&= \frac{(1-q^2)(1-q^m)}{(1-q)(1-q^2)(1-q^m)-q^3(1-q^{m-1}) - q^2(1-q)t} \\
&= \frac{1}{1-q} \sum_{i\ge0} \left( \frac{q^3(1-q^{m-1})}{(1-q)(1-q^2)(1-q^m)} + \frac{q^2(1-q)t}{(1-q)(1-q^2)(1-q^m)} \right)^i \\
&= \sum_{i,k\ge0} \binom{i+k}{k} \frac{ q^{3i+2k}(1-q^{m-1})^i t^k} {(1-q)^{i+1}(1-q^2)^{i+k}(1-q^m)^{i+k}}.
\end{align*}
Extracting the coefficient of $q^nt^k$ from this gives the desired formula for $\rac^k(n,m)$.
\end{proof}

Taking $k=0$ in Theorem~\ref{thm:racm} immediately gives formulas for $\rac_+(n,m)$ and $\rac(n,m)$.
Although $\ac_+(n,m)$ and $\ac(n,m)$ are not related to any sequences in OEIS (see~Section~\ref{sec:acm}), we find the following special cases of $\rac_+(n,m)$ and $\rac(n,m)$:
\begin{itemize}
\item
We have $\rac_+(2n,1)=1$, $\rac_+(2n+1,1)=0$, $\rac(n,1)=1$ for $n\ge0$. 
\item
We have $\rac_+(0,2)=1$, $\rac_+(1,2)=0$ and for $n\ge2$ the number $\rac_+(n,2)$ counts compositions of $n-2$ with no two adjacent parts of the same parity~\cite[A062200]{OEIS}.  
\item
The sequence $\rac(n,3)$ for $n\ge0$ agrees with the sequence~\cite[A113435]{OEIS}; the latter currently does not contain any combinatorial interpretation.
\end{itemize}

For $m=1$ we obtain $\rac_+^k(n,1) = \sum_{2i+j=n-2k} \binom{i+k}{k}\binom{j+k-1}{j}$ from the generating function
\begin{align*}
\sum_{n,\, k\ge0} \rac_+^k(n,1) q^n t^k = \frac{1-q}{(1-q)-q^2(1-q)-q^2t} 
= \sum_{i,k\ge0} q^{2(i+k)}\binom{i+k}{k} \frac{t^k}{(1-q)^k}.
\end{align*}
Since every two integers are congruent modulo $m=1$, the number $\rac_+^k(n,1)$ counts all compositions $\alpha=(\alpha_1, \ldots, \alpha_\ell)$ of $n$ such that $\alpha_{(\ell+1)/2}$ is even whenever $\ell$ is odd and that $\alpha_h \ge \alpha_{\ell+1-h}$ for $h=1, \ldots, \lfloor \ell/2 \rfloor$. 
Thus the above formula for $\rac_+^k(n,1)$ can also be obtained by modifying the combinatorial proof of Theorem~\ref{thm:pc} with $|\beta|=j$, $|\gamma|=2(i+k)$, and $\lfloor \ell(\gamma)/2 \rfloor =k$.
We have $\rac_+^1(2n,1) = \rac_+^1(2n+1,1) = \sum_{0\le i\le n-1} (i+1) = n(n+1)/2$, giving a new interpretation for the sequence of repeated triangular numbers~\cite[A008805]{OEIS}.
We also have $\rac_+^2(n,1) = \sum_{2i+j=n-4} \binom{i+2}{2} (j+1)$~\cite[A096338]{OEIS}.

Similarly, we obtain $\rac^k(n,1) = \sum_{2i+j=n-2k} \binom{i+k-1}{i}\binom{j+k}{j}$ from the generating function
\begin{align*}
\sum_{n,\, k\ge0} \rac^k(n,1) q^n t^k = \frac{1-q^2}{(1-q)(1-q^2)-q^2t} 
= \frac{1}{1-q}\sum_{i,k\ge0} \frac{q^{2k}t^k}{(1-q)^k(1-q^2)^k}.
\end{align*}
The number $\rac^k(n,1)$ counts compositions $\alpha=(\alpha_1, \ldots, \alpha_\ell)$ of $n$ such that $\alpha_h \ge \alpha_{\ell+1-h}$ for $h=1, \ldots, \lfloor \ell/2 \rfloor$. 
This gives a new interpretation for a triangular array of integers~\cite[A060098]{OEIS}.
Some special cases include $\rac^1(n,1) = \sum_{0\le i\le \lfloor n/2 \rfloor-1} (n-2i-1) = \lfloor \frac n 2 \rfloor \lceil \frac n 2 \rceil$ which coincides with both $\ac_+^1(n,1)$ and~\cite[A002620]{OEIS}, $\rac^2(n,1) = \sum_{2i+j=n-4} (i+1)\binom{j+2}{2}$~\cite[A002624]{OEIS}, $\rac^3(n,1)$~\cite[A060099]{OEIS}, $\rac^4(n,1)$~\cite[A060100]{OEIS}, and $\rac^5(n,1)$~\cite[A060101]{OEIS}.
A combinatorial explanation for $\rac^1(n,1) = \ac_+^1(n,1)$ would be interesting.

For $m\ge2$ we do not find any connection of $\rac_+^k(n,1)$ or $\rac^k(n,1)$ with existing sequences in OEIS~\cite{OEIS}.

\section*{Acknowledgment}

The author uses SageMath to help discover and verify the closed formulas in this paper.

\end{document}